\DeclareMathOperator{\Lip}{Lip}
\DeclareMathOperator{\LLip}{LLip}
\DeclareMathOperator{\LcLip}{L_cLip}
\def\R{\mathbb{R}}
\def\C{C}
\newtheorem{theorem}{Theorem}[section]
\newtheorem{lemma}[theorem]{Lemma}
\newtheorem{remark}[theorem]{Remark}
\newtheorem{corollary}[theorem]{Corollary}
\newtheorem{proposition}[theorem]{Proposition}
\newtheorem{example}[theorem]{Example}
\newtheorem{definition}[theorem]{Definition}
\begin{document}

\title{Lattice Lipschitz operators on $C(K)-$space}

\author[R. Arnau, J. M. Calabuig and E.\ A.\ S\'{a}nchez-P\'{e}rez]{Roger  Arnau, Jose M. Calabuig  and     Enrique \ A.\ S\'{a}nchez-P\'{e}rez$^*$}


\address{   Instituto Universitario de Matem\'atica Pura y Aplicada,
              Universitat Polit\`ecnica de Val\`encia \\ Camino de Vera s/n, 46022
              Valencia, Spain. }
            \email{ararnnot@posgrado.upv.es, jmcalabu@mat.upv.es, easancpe@mat.upv.es}

\maketitle

\begin{abstract}
Given a Banach lattice $L,$ the space of lattice Lipschitz operators on $L$ has been introduced as a natural  Lipschitz generalization of the linear notions of diagonal operator and multiplication operator on Banach function lattices. It is a particular space of superposition operators on Banach lattices. Motivated by certain procedures in Reinforcement Learning based on McShane-Whitney extensions of Lipschitz maps, this class has proven to be useful also in the classical context of Mathematical Analysis. In this paper we discuss the properties of such operators when defined on spaces of continuous functions, focusing attention on the functional bounds for the pointwise Lipschitz inequalities defining the lattice Lipschitz operators, the representation theorems for these operators as vector-valued functions and the corresponding dual spaces. Finally, and with possible applications in Artificial Intelligence in mind, we provide a McShane-Whitney extension theorem for these operators.
\end{abstract}

\vspace{0.3cm}

\noindent
\keywords{Lipschitz function, continuous function, $C(K)$,  lattice Lipschitz, non-linear operator, Banach lattice.}

\vspace{0.3cm}

\noindent
\subjclass{AMS: 26A16,   
 46E15, 
47B38  	
 }

\vspace{0.3cm}
\noindent
\thanks{{*}Corresponding author. The first author was supported by a contract of the Programa de Ayudas 
de Investigaci\'on y Desarrollo (PAID-01-21), Universitat 
Polit\`ecnica de Val\`encia. This publication is part of the R\&D\&I project/grant 
PID2022-138342NB-I00 funded by MCIN/ AEI/10.13039/501100011033/ (Spain). }

\section{Introduction}

%

Lattice Lipschitz operators have been introduced as a natural extension of the notion of diagonal operator on Euclidean lattices, as efficient tools for the representation, approximation and extension of Lipschitz maps on these spaces (\cite{arn1, arn2}). Although the first study has been performed on finite-dimensional \cite{arn1} spaces, it also makes sense in the context of classical lattices of  functions such as $C(K)$ or Banach function spaces, extending in this case the notion of multiplication operator on function spaces to the setting of Lipschitz functions. 

The original motivation for the study of this class was the goal of providing new functional tools for Artificial Intelligence to generalize the role played by real-valued Lipschitz functions and their extensions in this field, a hot topic of current research. Indeed,
Lipschitz functions are a usual tool in machine learning that allows the interchange of ideas from pure mathematical frameworks to applied ones \cite{ar,lel}. In recent times, it has become relevant for example in deep learning and reinforcement learning.
Control on neural networks approximation is one of the most common context of application of Lipschitz functions, in the context of the so-called Lipschitz neural networks \cite{le,ta}. However, Lipschitz functions appear in different contexts, as function approximation \cite{ani,lel} or reinforcement learning with reward functions with Lipschitz control \cite{as,ca,ky,li,vo}.

However, it seems to be also a useful tool for the analysis of certain issues of Functional Analysis involving Lipschitz operators on Banach spaces (summability, factorization, nonlinear geometry,...) which are also central topics of current research in this area of mathematics (see \cite{ac,an,ch,fa,fe,ma,ru,ya} and the references therein).  This is our motivation for the present paper. We show that lattice Lipschitz operators are diagonal Lipschitz maps emulating the linear case, in which multiplication operators are the canonical diagonal maps. It must be said that multiplication operators (operators on function spaces defined by pointwise multiplication by a given function) are necessary tools for some classical fields of the Functional Analysis, such as the Maurey-Rosenthal factorization of $p-$concave maps \cite{defa}, and the Pisier factorization of $(q,1)-$summing operators acting on $C(K)-$spaces \cite{pisier}. Also in summability theory, more related in this case to the Pietsch-type factorization of $p-$summing operators and its many variants \cite{dies}. As proposed in \cite{ArnpreBfs} for the case of Banach function spaces, we intend to introduce lattice Lipschitz operators on $C(K)-$spaces to provide the equivalent tool for new developments on summability of Lipschitz maps.

From the theoretical point of view, some general facts about the family of functions we are interested in are known since the last quarter of the twentieth century. In fact, some of their properties have been studied in depth by J. Appell and  some of his collaborators  in the context of what is known in the mathematical literature as superposition operators \cite{appel2,appel3,appLib}.  In fact, some relevant characterizations of locally Lipschitz superposition operators satisfying inequalities like the ones we use in our definitions can be found in some classical works, and some of the main results we consider can be found there (see e.g. \cite[Th. 1]{appell}, \cite[Th.1]{appel3}).  In this paper we face the problem of describing such a family of maps by means of an identification procedure that relates operators to vector-valued functions and elements of topological tensor products, thus completing what is known about such maps.

A relevant property that makes lattice Lipschitz operators interesting for Machine Learning and also for Functional Analysis is that, under some restrictions on the domain space (Euclidean spaces, Hilbert spaces, Banach function spaces, among others), operators of this class satisfy McShane-Whitney type extension theorems \cite{ar}. As we have explained above, this have proved to be fruitful for applications, allowing a general functional form for Lipschitz regression in the same way that these classical extension theorems are used in the real-valued case. 

In this paper we investigate the main properties of lattice Lipschitz operators on $C(K)-$spaces. As we already said, the main idea is that lattice Lipschitz operators are diagonal, mimicking the role of the multiplication operators on $C(K)-$spaces defined by functions belonging to $C(K)$ in the Lipschitz functional context.   In Section \ref{S2} we discuss the properties of the bound functions that appear in the inequalities of the lattice Lipschitz operators, providing equivalent characterizations and weakening the requirements in the definition of these inequalities. We show that  we can always find a continuous function as bound, while the minimal function satisfying the inequalities is not in general continuous. In Section \ref{S3} we give several representation results for lattice Lipschitz operators as vector-valued functions, what provides in a natural way some special subspaces of lattice Lipschitz maps in which the continuity properties are better.  
As a consequence of the representation theorems we get, we show also structure results.  For example, we show that as in the case of multiplication operators on $C(K)-$spaces, due to the diagonal nature of the maps involved, the space of lattice Lipschitz operators is a (non-commutative) unital algebra with the usual composition of operators. 
This, together with Section \ref{S2}, gives the general picture of what lattice Lipschitz operators are, and allows to find some concrete applications, that are shown in Section \ref{S4}. The first one is the identification of the duals of some spaces of lattice Lipschitz maps. The second one are bilinear and operator-valued representations of the dual space of the main class of lattice Lipschitz operators. 
Finally, the last subsection is devoted to the extension properties of these maps, which is possibly the most productive application taking into account the suitable use of these results in Lipschitz regression and Machine Learning. We show that, given a lattice Lipschitz operator on  any subset of $C(K),$ we can always extend it to the whole $C(K)$ preserving any continuous functional bound for the operator. Some applications are also shown.

Before starting to explain our results, let us mention some ideas  (mainly due to Kalton and Godefroy) on Lipschitz maps defined on $C(K)-$spaces \cite{go1,go2,ka0,Kal03,ka1,ka2}. Although there are obvious connections with our ideas, it must be said that we have not used these results, due to the fact that lattice Lipschitz operators are a rather particular subspace of the usual Lipschitz maps. However, note that the main topic that is studied in these papers is the same that we analyze here: the possibility of extending a map acting in a subspace to a well-defined Lipschitz operator on $C(K).$

We will use standard Banach lattices notation and results. If $X$ is a Banach space, we write $L(X)$ as the space of linear operators from $X$ to $X.$ If $K$ is a compact Hausdorff space, we write $C(K)$ for the space of continuous real functions on $K$ with the usual supremum norm. We will write $B(K)$ for the space of (non-necessarily continuous) bounded  real functions  acting in $K.$
For the aim of simplicity, we will often use the notation of the order in the lattice ($f \le g$) instead of doing an explicit reference of the evaluation at any point $w$ 
($f(w) \le g(w)$).  We will write $Lip(\mathbb R)$ for the space of real valued Lipschitz function of real variable, and $Lip(\phi)$ for the Lipschitz norm of such a function $\phi \in Lip(\mathbb R).$
The reader can find definitions and  fundamental results on Lipschitz functions in \cite{cobzas}.


%

\vspace{0.5cm}


\section{Uniform boundedness and optimal bound functions for lattice Lipschitz operators on $C(K).$}  \label{S2}

In this section we present the definition and main properties of the lattice Lipschitz operators defined on $C(K)-$spaces.  The direct adaptation of the original definition  gives the following.

\begin{definition} (adapted from Definition 1 in \cite{arn1})
Let $B \subseteq C(K).$
A map $T: B \to C(K)$ is a  lattice Lipschitz operator if there is a function $\varphi \in C(K)$ such that for every two functions $f,g \in B$ we have the inequality
\begin{equation}  
	\label{eq:llip_inequality}
	\big| T(f)(w)-T(g)(w) \big| \le \varphi(w) \, \big| f(w)-g(w) \big|.
\end{equation}
We will write $\LLip(B,\C(K))$ for the space of all the lattice Lipschitz operators from $B$ to $\C(K)$, and $\LLip(\C(K))$ when $B = \C(K)$.
\end{definition}

In the context of superposition operators, the inequalities constituting the above definition are in fact, under certain requirements, equivalent to being a Lipschitz map, as can be seen in Theorem 1 of \cite{appell} for the case $K=[0,1]$ (see also Th.6.6 in \cite[\S6]{appLib}). Also, Theorem 1 of \cite{appel3} shows that, again under certain restriction, the constant appearing in the inequality can be defined locally and thus a representation as  function of $C(K)$ of the ratio among both parts of the inequality  (\ref{eq:llip_inequality}) can be provided. 

\medskip
In particular, one can easily see by simply taking supremum on both sides of the above inequality that any lattice Lipschitz operator is Lipschitz.
 Let us show some examples. 

\begin{example} \label{exa1}
	The first one is in a sense standard. It is defined by using the composition with a real-valued Lipschitz function of real variable.
	\begin{enumerate}[i)]
		\item Let $k  >0$ and consider the function $T_k(f)(w):= \frac{k}{k+ |f(w)|};$ since
		\begin{align*}
			\big| T(f)(w) - T(g)(w) \big| & = \left| \frac{k}{k + |f(w)|} -  \frac{k}{k+ |g(w)|} \right| =   \frac{ k \, \big| | f(w)| - |g(w)| \big| }{ \big( k + |f(w)| \big) \big( k + |g(w)| \big) } \\
			& \le \frac{ k \, \big| f(w) - g(w) \big| }{ \big( k + |f(w)| \big) \big( k + |g(w)| \big) } \le  \frac{1}{k} \, \big|  f(w) - g(w) \big|
		\end{align*}
		we have that $T_k$ is lattice Lipschitz with bound function $\varphi$ less or equal to (the constant function) $1/k.$
		
		\item However, in a sense, the lattice Lipschitz operators are a generalization of the notion of (linear) multiplication operator on $C(K)-$spaces, that are well-defined as a consequence of the Banach algebra structure of the spaces of continuous functions; ``diagonality" is  the main feature of our class of Lipschitz-type maps.
		Indeed, let $ h \in C(K)$ and consider the multiplication operator $M_h: C(K) \to C(K)$ given by $M_h(f)= h \cdot f,$ $f \in C(K).$ Then $M_h$ is lattice Lipschitz, as a consequence of the inequalities
		$$
		|M_h(f)(w) - M_h(g)(w) | = | h(w) f(w) - h(w) g(w) | \le  | h(w) | \, | f(w) - g(w) |
		$$
		for all $f,g \in C(K),$
		where the bound function is $\varphi= |h|.$
	\end{enumerate}
\end{example}

\medskip

Due to the properties of $C(K),$ the following characterization of the lattice Lipschitz operators is trivially satisfied. Note, however, this does not hold in general  for lattice Lipschitz operators on other function spaces (see S. 2 the examples in S.3 of  \cite{ArnpreBfs}).

\begin{remark} \label{remconstant}
	Let $B \subseteq C(K).$
	A map
	$T: B \to C(K)$ is  a  lattice Lipschitz operator if  and only if there is a constant $Q >0$ such that for every two functions $f,g \in B$ we have the inequality
	\begin{equation}
		\label{eq:Q_uniform_bound}
		\big| T(f)-T(g) \big| \le Q \, \big| f-g \big|.
	\end{equation}
	To see this, it is enough to consider the constant function $Q \chi_K$ for $Q = \|\varphi\|_{C(K)},$ that belongs to $C(K)$ and satisfies
	$\varphi \le Q \chi_K,$ and so the inequality holds.
\end{remark}

This fact allows to define a norm on the space of lattice Lipschitz operators as follows.

\begin{definition}
	Given $T \in \LLip(B,\C(K))$, we define $ \| T \|_{\LLip}$ as the infimum of all $Q > 0$ satisfying \eqref{eq:Q_uniform_bound}, that is,
	\begin{equation}
		\label{eq:llip_norm}
		\| T \|_{\LLip} = \inf \big\{ \| \varphi \|_\infty : \varphi \in \C(K) \ \mathrm{satisfies} \ \eqref{eq:llip_inequality} \}.
	\end{equation} 
\end{definition}
It is clearly a norm. Let us show that the corresponding space is in fact a Banach space.
\begin{proposition}
	\label{propo:llip_banach}
	The space of lattice Lipschitz operators on $\C(K)$ that vanish at $0,$ $\big( \LLip_0(\C(K)), \| \cdot \|_{\LLip} \big)$ is a Banach space.
\end{proposition}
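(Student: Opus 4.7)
The plan is to verify completeness directly: take a Cauchy sequence and construct a pointwise limit that inherits the lattice Lipschitz property, then show convergence in norm.

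First I would exploit the key identity that every $T \in \LLip_0(C(K))$ satisfies, for any $f \in C(K)$ and $w \in K$,
\[
|T(f)(w)| = |T(f)(w) - T(0)(w)| \le \|T\|_{\LLip}\,|f(w)|,
\]
using Remark \ref{remconstant}. Applied to a difference $T_n - T_m$ (which is again lattice Lipschitz, since the defining inequality is stable under sums), this gives
\[
\|T_n(f) - T_m(f)\|_\infty \le \|T_n - T_m\|_{\LLip}\,\|f\|_\infty.
\]
Hence if $(T_n)$ is Cauchy in $\LLip_0(C(K))$, then $(T_n(f))$ is Cauchy in $C(K)$ for every $f$, and since $C(K)$ is complete I may define $T(f) := \lim_n T_n(f) \in C(K)$ pointwise and uniformly on $K$.

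Next I would check that $T \in \LLip_0(C(K))$. The Cauchy sequence is bounded in norm, so let $Q := \sup_n \|T_n\|_{\LLip} < \infty$. For every $f,g \in C(K)$ and $w \in K$,
\[
|T(f)(w) - T(g)(w)| = \lim_n |T_n(f)(w) - T_n(g)(w)| \le Q\,|f(w) - g(w)|,
\]
so by Remark \ref{remconstant} the constant bound $Q$ shows $T$ is lattice Lipschitz. Moreover $T(0) = \lim_n T_n(0) = 0$.

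Finally I would show $\|T_n - T\|_{\LLip} \to 0$. Given $\varepsilon > 0$, pick $N$ with $\|T_n - T_m\|_{\LLip} < \varepsilon$ for $n,m \ge N$. For such $n,m$ and arbitrary $f,g,w$,
\[
\bigl|(T_n - T_m)(f)(w) - (T_n - T_m)(g)(w)\bigr| \le \varepsilon\,|f(w) - g(w)|.
\]
Letting $m \to \infty$ (using pointwise convergence) yields the same inequality with $T_m$ replaced by $T$, so $T_n - T$ satisfies \eqref{eq:Q_uniform_bound} with constant $\varepsilon$, hence $\|T_n - T\|_{\LLip} \le \varepsilon$.

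The only subtle point is verifying that the limit $T(f)$ actually lies in $C(K)$ rather than merely in $B(K)$; this is handled because the pointwise bound on $\|T_n(f)-T_m(f)\|_\infty$ is uniform in $w$, giving uniform convergence of $(T_n(f))$ and therefore continuity of the limit. Everything else is a routine normed-space completeness argument, and the fact that we work in $\LLip_0$ is what ensures the pointwise bound $|T_n(f)(w)| \le \|T_n\|_{\LLip}\,|f(w)|$ needed to launch the argument.
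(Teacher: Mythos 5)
Your proof is correct and follows essentially the same route as the paper's: define $T(f)$ as the pointwise/uniform limit of $T_n(f)$, verify the lattice Lipschitz inequality for the limit, and pass to the limit in $m$ in the inequality for $T_n - T_m$ to get norm convergence. You are in fact slightly more careful than the paper in justifying why $(T_n(f))$ is uniformly Cauchy — namely via the bound $|(T_n-T_m)(f)(w)| \le \|T_n-T_m\|_{\LLip}\,|f(w)|$ coming from the vanishing-at-$0$ condition — a step the paper asserts without comment.
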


\begin{proof}
	Standard computations show that it is a normed space.
	Consider now a Cauchy sequence $T_n \in \LLip(\C(K))$. For any $f \in \C(K)$ and $w \in K$, $ \{ T_n(f)(w) \} $ is a Cauchy sequence in $\R$, so we can define $T(f)(w)$ as its limit. Moreover, $T_n(f) \to T(f)$ uniformly on $w \in K$, so $T: \C(K) \to \C(K)$ is well defined.
	To show that $T \in \LLip(C(K))$ and $\| T_n - T \|_{\LLip} \to 0$, fix $\varepsilon > 0$, so there exists $n_0 \in \mathbb N$ such that $\| T_n - T_m \|_{\LLip} < \varepsilon$ for $n, m \geq n_0$.
	Let now $f, g \in \C(K)$ and $w \in K$. Then for any $n \geq n_0$,
	\begin{align*}
		\big| (T_n-T)&(f)(w) - (T_n-T)(g)(w) \big| \\
		& = \lim_{m \to \infty} \big| T_n(f)(w) - T_m(f)(w) - T_n(g)(w) + T_m(g)(w) \big| \\
		& \leq \varepsilon \cdot \big| f(w) - g(w) \big|.
	\end{align*}
\end{proof}

The norm defined above for lattice Lispchitz operators coincides with the Lipschitz norm of the map from $\C(K)$ to $\C(K)$, where the usual (supremum) norm is considered on the space of continuous functions. Therefore, we can consider $(\LLip_0(C(K)), \| \cdot \|_{\LLip})$ as a closed subspace of $(\Lip(\C(K),\C(K)), \| \cdot \|_{\Lip})$.

\begin{proposition}
	For any $T \in \LLip(\C(K))$, we have $\| T \|_{\LLip} = \| T \|_{\Lip}$.
\end{proposition}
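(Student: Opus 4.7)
The proof will establish both inequalities $\|T\|_{\Lip} \le \|T\|_{\LLip}$ and $\|T\|_{\LLip} \le \|T\|_{\Lip}$.

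The first inequality is immediate: given $\varepsilon > 0$, pick $\varphi \in C(K)$ satisfying the pointwise inequality \eqref{eq:llip_inequality} with $\|\varphi\|_\infty \le \|T\|_{\LLip} + \varepsilon$. Taking the supremum over $w \in K$ on both sides of \eqref{eq:llip_inequality} yields $\|T(f) - T(g)\|_\infty \le (\|T\|_{\LLip} + \varepsilon) \|f - g\|_\infty$ for all $f,g$, and letting $\varepsilon \to 0$ gives $\|T\|_{\Lip} \le \|T\|_{\LLip}$.

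The main content is the reverse inequality. By Remark \ref{remconstant} it suffices, for arbitrary $f, g \in C(K)$ and $w_0 \in K$, to show the pointwise estimate
\[
\big| T(f)(w_0) - T(g)(w_0) \big| \le \|T\|_{\Lip} \, \big| f(w_0) - g(w_0) \big|,
\]
because then $\varphi = \|T\|_{\Lip} \chi_K$ is an admissible continuous bound. The plan is to localize $g$ near $w_0$. Given an open neighborhood $U$ of $w_0$, normality of the compact Hausdorff space $K$ together with Urysohn's lemma produces a continuous function $\rho \in C(K)$ with $0 \le \rho \le 1$, $\rho(w_0) = 1$ and $\rho \equiv 0$ on $K \setminus U$. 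Set
\[
\tilde g := f + \rho \, (g - f) \in C(K),
\]
so that $\tilde g(w_0) = g(w_0)$ and $\tilde g \equiv f$ outside $U$, and $|\tilde g - f|(w) = \rho(w) |g(w) - f(w)|$ for every $w \in K$.

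Now the lattice Lipschitz property of $T$ with any (a priori existing) bound function $\varphi_T \in C(K)$ gives $|T(g)(w_0) - T(\tilde g)(w_0)| \le \varphi_T(w_0) |g(w_0) - \tilde g(w_0)| = 0$, i.e. $T(g)(w_0) = T(\tilde g)(w_0)$. Combining this with the Lipschitz bound applied to $f$ and $\tilde g$,
\[
\big| T(f)(w_0) - T(g)(w_0) \big| = \big| T(f)(w_0) - T(\tilde g)(w_0) \big| \le \|T\|_{\Lip} \, \|\tilde g - f\|_\infty \le \|T\|_{\Lip} \sup_{w \in U} |f(w) - g(w)|.
\]
Continuity of $|f - g|$ at $w_0$ allows us to choose, for every $\varepsilon > 0$, a neighborhood $U$ on which $|f(w) - g(w)| < |f(w_0) - g(w_0)| + \varepsilon$. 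Letting $\varepsilon \to 0$ produces the desired pointwise inequality, and so $\|T\|_{\LLip} \le \|T\|_{\Lip}$.

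The only subtle step is the localization: everything hinges on the fact that the diagonal/lattice Lipschitz character of $T$ makes $T(g)(w_0)$ depend only on values of $g$ arbitrarily close to $w_0$, which is why one can trade $g$ for its ``continuous patch'' $\tilde g$ that agrees with $f$ far from $w_0$. This step uses in an essential way both the normality of $K$ (to get $\rho$) and the defining inequality of $\LLip$; the rest is a routine combination of continuity and the Lipschitz bound.
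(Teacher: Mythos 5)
Your proof is correct. Both inequalities are established soundly, and the key step --- using the lattice Lipschitz inequality at the single point $w_0$ to conclude $T(g)(w_0)=T(\tilde g)(w_0)$ whenever $g(w_0)=\tilde g(w_0)$ --- is exactly the ``diagonal'' mechanism the paper relies on. The difference is in how you realize the replacement of $g$: the paper's proof takes $f,g$ with $f-g$ constant (implicitly replacing $g$ by $f-\bigl(f(w_0)-g(w_0)\bigr)\chi_K$, which has the same value as $g$ at $w_0$), so that $|f(w_0)-g(w_0)|=\|f-g\|_\infty$ holds \emph{exactly} and the pointwise ratio becomes a Lipschitz quotient in one line. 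You instead patch $g$ onto $f$ with a Urysohn bump $\rho$ supported near $w_0$, which only gives $\|\tilde g-f\|_\infty\le\sup_{U}|f-g|$ and requires an extra $\varepsilon$-argument via continuity of $|f-g|$. The constant-shift trick is shorter and needs no topology beyond the constants lying in $C(K)$; your localization costs a little more (normality of $K$ and a limiting step) but makes the justification of the paper's terse ``such we can assume constant on $K$'' fully explicit, and it would survive in settings where constant perturbations are not available in the domain lattice.
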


\begin{proof}
	Let $T : \C(K) \to \C(K)$ be a non-null lattice Lipschitz operator. It is obvious that $T$ is a Lipschitz map.
	Now, let $\varepsilon > 0$. Then there exists $f, g \in \C(K)$ such that
	\begin{align*}
		\| T \|_{\Lip} - \varepsilon & < \dfrac{ \| T(f) - T(g) \|_\infty }{ \| f - g \|_\infty }
		= \dfrac{ | T(f)(w) - T(g)(w) | }{ | f(k) - g(k) | } \\
		& \leq \dfrac{ | T(f)(w) - T(g)(w) | }{ | f(w) - g(w) | }
		\leq \| T \|_{\LLip},
	\end{align*}
	where $w, k \in K$ are the points at which the respective suprema are attained. Letting $\varepsilon \to 0$, $\| T \|_{\Lip} \leq \| T \|_{\LLip}$.
	For the reverse inequality, given $\varepsilon > 0$ we have some $f, g \in \C(K)$ (such we can assume constant on $K$) and $w \in K$ satisfying
	\begin{align*}
		\| T \|_{\LLip} - \varepsilon & < \dfrac{ T(f)(w) - T(g)(w) }{ f(w) - g(w) }
		= \dfrac{ T(f)(w) - T(g)(w) }{ \| f - g \|_\infty } \\
		& \leq \dfrac{ \| T(f)(w) - T(g)(w) \|_\infty }{ \| f - g \|_\infty }
		\leq \| T \|_{\Lip}.
	\end{align*}
\end{proof}


A similar result can be found in \cite{appell} (Theorem 1). This is written for $K = [0,1] \subset \mathbb R,$ and from the point of view that the map considered is a superposition operator, and not starting from the pointwise inequality we use as a definition.

However, and in order to get a good description of the properties of the lattice Lipschitz operators, we are interested in finding the best function bound $\varphi$ that can be written in the inequalities that satisfy the lattice Lipschitz map.
Let us start by showing that this uniform  boundedness property that is required for all couples of functions  in the set $B$ can be reduced to the existence of such a bound for finite subsets of functions in $B,$ whenever these function bounds belong to a  weakly compact set. This automatically can be rewritten as a summability property for the operator $T,$ as will be shown in what follows.

Let us recall first some basic facts on weakly compact subsets of $C(K).$ The Krein-\v{S}mulian Theorem states that the closed convex hull of a weakly compact subset of a Banach space is  weakly compact (6.38 of \cite{ali}). A result by Grothendieck states that a subset $S \subset C(K)$ is weakly  compact if and only if it is bounded and compact  in the topology of poinwise convergence (Th.5 in \cite{gro}, see also \S4 in \cite{floret}). Next result shows that it is enough to find bound functions for $T$ on finite subsets of $B$ whenever such bound functions belong to a weakly compact convex subset $S$ of $C(K),$ and in this case  a bound function belonging to $S$ and valid for all functions can be always found.

%


\begin{lemma}  \label{finitesubs}
Let $S \subset C(K)^+$ be a weakly compact set (equivalently, by the Grothendieck's result cited above, bounded, and compact with respect to the topology of pointwise convergence), and let $B \subseteq C(K).$  Let $T: B \to C(K)$ be a map. Suppose that for every finite set $ \{f_1,g_1,...,f_n,g_n \}= W \subseteq B$ there is a function $\varphi_W \in S$ such that 
$$
|T(f_i)(w)-T(g_i)(w)| \le \varphi_W(w) \, | f_i(w)-g_i (w)|, \quad w \in K,  \quad i=1,...,n.
$$
Then there is a function $\varphi_0 \in \overline{co(S)} $ such that
$$
\big| T(f)(w)-T(g)(w) \big| \le \varphi_0(w) \, \big| f(w)-g(w) \big|, \quad w \in K,
$$
for every $f,g \in S.$
\end{lemma}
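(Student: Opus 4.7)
The plan is to use a compactness-plus-finite-intersection-property argument inside $\overline{\mathrm{co}(S)}$, which is weakly compact by the Krein--\v{S}mulian theorem invoked in the preamble of the lemma.

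For each pair $(f,g)\in B\times B$ I would introduce the slice
\begin{equation*}
	A_{f,g}:=\bigl\{\psi\in\overline{\mathrm{co}(S)}:\ |T(f)(w)-T(g)(w)|\le\psi(w)\,|f(w)-g(w)|\ \text{for all }w\in K\bigr\}.
\end{equation*}
The goal reduces to showing $\bigcap_{(f,g)\in B\times B}A_{f,g}\neq\emptyset$, because any element of this intersection serves as the desired $\varphi_0$.

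First I would verify the finite intersection property. Given finitely many pairs $(f_1,g_1),\dots,(f_n,g_n)$, the hypothesis applied to the finite set $W=\{f_1,g_1,\dots,f_n,g_n\}$ produces a single $\varphi_W\in S\subseteq\overline{\mathrm{co}(S)}$ that lies in every $A_{f_i,g_i}$, so $\bigcap_{i=1}^n A_{f_i,g_i}\ni\varphi_W$. Next I would check that each $A_{f,g}$ is closed in the topology of pointwise convergence: if $\psi_\alpha\to\psi$ pointwise on $K$, then passing to the pointwise limit in the defining inequality preserves it, so $\psi\in A_{f,g}$.

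The key step, and the only delicate one, is promoting pointwise-closedness to weak-closedness on $\overline{\mathrm{co}(S)}$. Here I would invoke Grothendieck's theorem cited in the paragraph preceding the lemma: on a weakly compact subset of $C(K)$, the weak topology coincides with the topology of pointwise convergence. Since $\overline{\mathrm{co}(S)}$ is weakly compact by Krein--\v{S}mulian, this identification applies, so each $A_{f,g}$ is relatively weakly closed in $\overline{\mathrm{co}(S)}$, hence weakly compact. A family of weakly closed subsets of a weakly compact set with the finite intersection property has non-empty intersection, yielding $\varphi_0\in\bigcap_{(f,g)}A_{f,g}$ and completing the proof.

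The main obstacle I anticipate is simply being careful that the bound functions $\varphi_W$ produced by the hypothesis live in $S$ (hence in the compact set $\overline{\mathrm{co}(S)}$), and that both the closedness of $A_{f,g}$ and the compactness of the ambient set are taken with respect to the \emph{same} topology; Grothendieck's identification of weak and pointwise topology on weakly compact subsets of $C(K)$ is what makes this compatibility work.
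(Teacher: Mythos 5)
Your proof is correct, but it takes a genuinely different route from the paper's. The paper applies Ky Fan's Lemma to the concave family of weakly continuous convex functionals
$\Phi(\varphi)=\sum_i \alpha_i \int_K |T(f_i)-T(g_i)|\,d\mu_i - \sum_i \alpha_i \int_K |f_i-g_i|\,\varphi\, d\mu_i$
indexed by finite families of pairs and probability measures on $K$, and then recovers the pointwise inequality by specializing to Dirac measures $\mu_i=\delta_w$; this is a minimax/separation argument on the weakly compact convex set $\overline{co(S)}$ and it meshes naturally with the summability condition (iii) of the theorem that follows the lemma, which is phrased in terms of pairings with probability measures. You instead run a finite-intersection-property argument: the constraint sets $A_{f,g}$ are pointwise-closed, the hypothesis supplies the FIP, and Grothendieck's identification of the weak and pointwise topologies on weakly compact subsets of $C(K)$ (a continuous bijection from a compact space onto a Hausdorff space is a homeomorphism) upgrades pointwise-closedness to weak closedness, so compactness finishes the job. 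Your argument is more elementary --- it avoids Ky Fan entirely and uses no convexity of the sets $A_{f,g}$ --- and in fact it proves slightly more: since the $\varphi_W$ already lie in $S$ and $S$ itself is weakly (hence pointwise) compact, you could intersect the sets $A_{f,g}\cap S$ directly and obtain $\varphi_0\in S$ rather than merely $\varphi_0\in\overline{co(S)}$, making the Krein--\v{S}mulian step dispensable. The trade-off is that the paper's functional-analytic formulation is the one that generalizes to the measure-theoretic statement (iii) of the subsequent theorem, whereas yours is tailored to the pointwise inequality. One cosmetic remark: the lemma's conclusion says ``for every $f,g\in S$'', which is evidently a typo for $f,g\in B$; both your argument and the paper's prove the intended statement for all pairs in $B$.
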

\begin{proof}
Since $S$ is weakly compact,  we also have that by the Krein-\v{S}mulian Theorem,   the (norm) closed convex hull
$\overline{co(S)}$ is weakly compact too. 

We can consider the family of all the functions $\Phi: \overline{co(S)} \to \mathbb R$ as
$$
\Phi(\varphi) := \sum_{i=1}^n \alpha_i \int_K |T(f_i)-T(g_i)| d \mu_i -  \, \sum_{i=1}^n  \alpha_i  \int_K |f_i - g_i|  \,  \varphi  \, d \mu_i,
$$
where the functions $f_1,g_1,...,f_n,g_n$ belong to $B$ and $\mu_i$ are probability measures on $K.$ Note that they are well-defined. These functions are continuous with respect to the weak topology of $C(K).$ It is clear that the set of all these functions is concave, and each of the functions is convex. Moreover, by the inequality provided by the original inequality for all couples of functions $f,g \in B,$ for each  function $\Phi$  there is an element $\varphi \in S$ such that $\Phi(\varphi) \le 0.$ A standard separation argument using Ky Fan's Lemma (see for example 9.10 in \cite{dies}) gives the result, taking into account the evaluation of the  resulting inequality for single couples of functions and $\mu_i= \delta_{w},$ the evaluation at any point $w \in K.$

\end{proof}

\begin{theorem}
Let   $B \subseteq C(K)$ and  let $T: B \to C(K)$ be a map.  Consider a weakly compact subset $S \subset C(K).$
The following statemens are equivalent.

\begin{itemize}

\item[(i)] $T$ is lattice Lipschitz with a function bound $\varphi \in \overline{co(S)}.$

\item[(ii)] For every finite set $ \{f_1,g_1,...,f_n,g_n \}= W \subseteq B$ there is a function $\varphi_W \in S$ such that  for every 
$w \in K,$
$$
|T(f_i)(w)-T(g_i)(w)| \le \varphi_W(w) \, | f_i(w)-g_i (w)|,  \quad i=1,...,n.
$$

\item[(iii)] For every weak Cauchy series $s= \sum_{i=1}^\infty |f_i-g_i|$ with $f_i ,g_i \in B$ there is a function $\varphi_{s} \in S$ such that for  every $N \subseteq \mathbb N$ 
  and each probability measure $\mu \in C(K)^*,$
$$
\sum_{i \in N}\langle |T(f_i)-T(g_i)|, \mu \rangle   \le \sum_{i  \in N}  \langle \varphi_{s} |f_i-g_i|, \mu \rangle.
$$
\end{itemize}

\end{theorem}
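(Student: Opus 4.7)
The plan is to close the cycle (ii) $\Rightarrow$ (i), (iii) $\Rightarrow$ (ii), (i) $\Rightarrow$ (iii). The first implication is immediate from Lemma \ref{finitesubs}: the hypothesis of that lemma is exactly statement (ii) and its conclusion is exactly statement (i), so no additional work is required there.

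For (iii) $\Rightarrow$ (ii), I would fix a finite set $W = \{f_1,g_1,\ldots,f_n,g_n\} \subseteq B$ and regard $\sum_{i=1}^{n} |f_i - g_i|$ (extended by zero tails) as a trivially weakly Cauchy series. Applying (iii) produces some $\varphi_W \in S$ satisfying the corresponding sum inequality. Specialising to the singleton index $N = \{i\}$ and to the Dirac probability measure $\mu = \delta_w$ at an arbitrary $w \in K$, the sum inequality collapses to
\[
|T(f_i)(w) - T(g_i)(w)| \le \varphi_W(w)\,|f_i(w) - g_i(w)|
\]
for every $i \in \{1,\ldots,n\}$ and every $w \in K$, which is precisely statement (ii).

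The main work is in (i) $\Rightarrow$ (iii), for which I would mimic the Ky Fan argument used in the proof of Lemma \ref{finitesubs}. Fix a weak Cauchy series $s = \sum_{i=1}^{\infty} |f_i - g_i|$ with $f_i, g_i \in B$. By the Krein-\v{S}mulian theorem the closed convex hull $\overline{co(S)}$ is weakly compact, so it serves as the domain of the separation argument. For each finite $N \subseteq \mathbb{N}$ and each probability measure $\mu$ on $K$ introduce the affine, weakly continuous functional
\[
\Phi_{N,\mu}(\varphi) = \sum_{i \in N}\bigl\langle |T(f_i)-T(g_i)|,\mu\bigr\rangle - \sum_{i \in N}\bigl\langle \varphi\,|f_i - g_i|,\mu\bigr\rangle
\]
on $\overline{co(S)}$. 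The family so obtained is closed under convex combinations, and the bound $\varphi$ provided by (i) already makes each $\Phi_{N,\mu}(\varphi) \le 0$ (by distributing the pointwise inequality through the integrals), so the finite intersection property required by Ky Fan's lemma is immediate. Ky Fan then delivers a single $\varphi_s \in \overline{co(S)}$ simultaneously satisfying all the sum inequalities demanded by (iii).

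The step I expect to be delicate is the bookkeeping distinction between $S$ and $\overline{co(S)}$: the Ky Fan mechanism naturally locates the bound in $\overline{co(S)}$, which matches the membership declared in (i) but is literally one step outside the membership ``$\varphi_s \in S$'' that is stated in (iii). I would therefore interpret (iii) in parallel with (i), reading $\varphi_s \in \overline{co(S)}$; taking the membership strictly in $S$ would require either that $S$ already be weakly closed and convex, or an additional extremal reduction that seems to go beyond what the preparatory Lemma \ref{finitesubs} gives.
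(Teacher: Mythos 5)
Your treatment of (ii) $\Rightarrow$ (i) and (iii) $\Rightarrow$ (ii) coincides with the paper's: the former is exactly Lemma \ref{finitesubs}, the latter is the same specialisation to finite $N$ and Dirac measures. Where you diverge is (i) $\Rightarrow$ (iii). The paper does not run a second Ky Fan argument there: it simply takes $\varphi_s := \varphi$, the function already supplied by (i), and observes that the domination $|T(f_i)-T(g_i)| \le \varphi\,|f_i-g_i|$ integrates term by term; the only content of that direction is the convergence check, namely that $\varphi\,|f_i-g_i| \le \|\varphi\|_{C(K)}\,|f_i-g_i|$ together with the weak Cauchy hypothesis on $\sum_i |f_i-g_i|$ makes $\sum_{i\in N}\langle \varphi|f_i-g_i|,\mu\rangle$ finite for every $N\subseteq\mathbb{N}$ and every probability measure $\mu$. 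Your own write-up actually contains the observation that makes Ky Fan superfluous --- you note that the $\varphi$ from (i) already satisfies $\Phi_{N,\mu}(\varphi)\le 0$ for every $N$ and $\mu$, which is precisely the statement that $\varphi$ itself can serve as $\varphi_s$ --- so the separation machinery buys you nothing here and only obscures the one step you omit, the convergence of the right-hand series when $N$ is infinite (your functionals are indexed by finite $N$ only, and passing to infinite $N$ needs the finiteness argument above, even if monotone convergence of the non-negative partial sums does the rest). Your closing remark about the mismatch between ``$\varphi_s \in S$'' in the statement of (iii) and the membership $\varphi_s \in \overline{co(S)}$ that the argument actually delivers is well taken: the paper's proof has exactly the same feature (it concludes with $\varphi_s = \varphi \in \overline{co(S)}$), and the same looseness appears in the claimed ``obvious'' implication (i) $\Rightarrow$ (ii), where $\varphi_W$ is asserted to lie in $S$. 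Reading $S$ as $\overline{co(S)}$ in (ii) and (iii), as you propose, is the consistent interpretation.
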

\begin{proof}
(i) $\Rightarrow$ (ii) is obvious, and the converse is given by Lemma \ref{finitesubs}. 

Note that  (iii) $\Rightarrow$ (ii); to see this, it is enough to consider finite sets $N$ in (iii) and $\mu=\delta_w$ for every $w \in K.$

Let  us show that (i) $\Rightarrow$ (iii). We already have a non-negative function $\varphi \in \overline{co(S)}$ satisfying the pointwise domination.  Take a weak Cauchy series $\sum_{i=1}^\infty |f_i-g_i|$ and $N \subseteq \mathbb N.$ Since for all  $i \in \mathbb N$ we have that 
$$
\varphi |f_i-g_i| \le \|\varphi\|_{C(K)} \, |f_i-g_i|,
$$
we have that  for a probability measure $\mu \in C(K)^*$ and $n \in \mathbb N,$
$$
\sum_{i \in N \cap \{1,...,n\}} \langle \varphi |f_i - g_i|, \mu \rangle \le \|\varphi\|_{C(K)}   \sum_{i \in N \cap \{1,...,n\}} \langle |f_i - g_i|, \mu \rangle
$$
$$
 \le \|\varphi\|_{C(K)}   \sum_{i =1}^\infty \langle |f_i - g_i|, \mu \rangle  < \infty,
$$
and so the series defined by the terms in the left hand side converges for every $\mu.$ The domination inequality gives also that 
$$
\sum_{i \in N} \langle |T(f_i) - T( g_i)| , \mu \rangle \le \sum_{i  \in N} \langle \varphi |f_i-g_i|, \mu \rangle
$$
and so the result holds for $\varphi_N=\varphi$ for every $N \subset \mathbb N.$

\end{proof}


Although this theoretical result provides a broader characterisation than that given by the existence of a uniform numerical bound for all pairs $f,g \in B,$ we are concerned with finding concrete realisations of suitable families $S$ of functions to operate with.  In fact, we are interested in finding the best bound function $\varphi$ that can be found for a given lattice Lipschitz operator. Let us show in what follows that such  function is not in general continuous, that is, it does not belong to $C(K).$


Let  $T:B \to C(K)$ be a lattice Lipschitz operator and for $f,g \in B$  consider the functions 
$$
\varphi_{f,g}(w) = \frac{| T(f)(w)- T(g)(w)|}{|f(w)-g(w)|},  \quad w \in K \quad \text{in the case that $f(w)\ne g(w),$}
$$
and 
$\varphi_{f,g}(w) = 0$ if $f(w)=g(w).$
These functions are bounded, as a consequence of the fact that $T$ is lattice Lipschitz. However, they are not necessarily continuous, even if we define it in a different way when $f(w)=g(w).$ Let us briefly discuss this point with an easy example.
 
\begin{example}
 Consider the Hausdorff compact set $K=[0,1]$ with the usual Euclidean topology. Take two functions $f,g \in C(K)$ such that
 $$
 f-g  = \left\{
	       \begin{array}{ll}
		 1/4 - w     & \mathrm{if\ } 0 \le w < 1/4 \\
		 0  & \mathrm{if \ } 1/4 \le w < 3/4 \\
		 -3/4 +w     & \mathrm{if\ } 3/4  \le w \le 1
	       \end{array}
	     \right.
$$
Let $B=\{f,g\}$ 
 and consider  also an operator $T: B \to C([0,1])$ such that
 $$
 T(f)-T(g)  = \left\{
	       \begin{array}{ll}
		 1/2 - 2 w     & \mathrm{if\ } 0 \le w < 1/4 \\
		 0  & \mathrm{if \ } 1/4 \le w < 3/4 \\
		 -3/4 +w     & \mathrm{if\ } 3/4  \le w \le 1
	       \end{array}
	     \right.
	     .
$$ 
We have that $|T(f)(w)-T(g)(w)| \le 2 |f(w)-g(w)|,$ $w \in [0,1],$ so $T$ is lattice Lipschitz.

Consider the function $\varphi(w):= \frac{|T(f)(w)-T(g)(w)|}{|f(w)-g(w)|},$ and note that it is only defined for $w \in [0,1/4) \cup (3/4,1]$ as
 $$
 \varphi(w)  = \left\{
	       \begin{array}{ll}
		 2   & \mathrm{if\ } 0 \le w < 1/4 \\
		 1     & \mathrm{if\ } 3/4  < w \le 1
	       \end{array}
	     \right.
	     .
$$ 
Therefore, it cannot be completed with  a constant function  in the central interval $ [ 1/4, 3/4 ]$, since each of the values $2, 1$ or $0$ that could be given results in a non-continuous function. Note also that the minimal function $\varphi(w)$  that could be written as functional bound $\varphi$ for the inequality $|T(f)(w)-T(g)(w)| \le \varphi(w) |f(w)-g(w)|,$ $w \in [0,1],$ is
 $$
 \varphi(w)  = \left\{
	       \begin{array}{ll}
		 2   & \mathrm{if\ } 0 \le w < 1/4 \\
		 0  & \mathrm{if \ } 1/4 \le w \le 3/4 \\
		 1     & \mathrm{if\ } 3/4  < w \le 1
	       \end{array}
	     \right.
$$ 
is obviously not continuous, so it does not belong to $C([0,1])$ and does not give an optimal solution to the problem of finding the best continuous functional bound; clearly, this problem has no solution in this case. Therefore, there are lattice Lipschitz operators for which the minimal bound is not a continuous function: define $B=\{f,g\}$ and $T$ as above.
\end{example}

Therefore, we know that the minimal function bound is not in general continuous. However, one could think that, in case that all the functions  $\varphi_{f,g}$ are continuous, we could find a global continuous minimal function bound. One might be tempted to define a continuous function as a uniform minimal  bound for all the  inequalities involving all couples of functions $f,g$ as the lower upper bound of the family $\{ \varphi_{f,g}:f,g \in B \}$ (in the sense of Banach lattices). The problem is that this lower upper bound does not necessarily exist, due to the fact that $C(K)$ is not a complete lattice in general. There is a characterization, due to Nakano and Stone,  of those compact Hausdorff spaces $K$ for which the corresponding space $C(K)$ is $\sigma-$complete or complete, in the lattice sense (see Proposition 1.a.4 in \cite{lint}): a space $C(K)$ is lattice complete if and only if $K$ is extremally disconnected, that is, the closure of every open set in it is again open. For example $\ell^\infty$ is complete, while $C[0,1]$ is not even $\sigma-$complete.  

Summing up the results in this section on the function bounds for lattice Lipschitz operators, we have that

\begin{enumerate}[i)]
	
	\item In general, the minimal function bound for a lattice Lipschitz operator is not a continuous function. But there is always a global continuous function bound, that is provided, for example, by a constant function.
	
	\item Suppose that there is a weakly compact set  $S \subseteq C(K)$ such that for every finite set $W$ of $B$ the associate lattice Lipschitz inequalities holds for a bound given by a certain $\varphi_W \in S.$ Then we can find a global function bound $\varphi \in \overline{co(S)} \subseteq C(K)$ for the lattice Lipschitz operator $T.$
	
\end{enumerate}

\section{Representation of lattice Lipschitz operators on $C(K)$ as vector valued (continuous) functions} \label{S3}

Consider a Banach space $(X,\| \cdot\|)$ and a functional $x' \in X^*$ which defines a seminorm $p_{x'} : X \to \mathbb R$ by $p_{x'}(x) := |\langle x, x' \rangle|,$ $x \in X.$  If $S \subseteq X$ and $T:S \to  X$ is a map, we can write the corresponding Lipschitz-type inequality as
$$
\big| p_{x'}(T(x))  -  p_{x'} (T(y)) \big|   \le  K \big|  p_{x'}(x)- p_{x'}(y)  \big|,  \quad x,y \in S,
$$
that is
$$
\big| \langle T(x)  -  T(y), x' \rangle \big|   \le  K \big|  \langle x- y, x' \rangle  \big|,  \quad x,y \in S.
$$

This kind of maps have been studied as natural generalizations of the notion of Lipschitz maps, although it can be easily seen that the definition is rather restrictive due to the fact that $x'(x)$ equals $0$ for a lot of elements $x \in X,$ and $T(x)$ has to be $0$ too if $y=0$ and $T(0)=0.$ A straightforward argument shows  that the McShane-Whitney extension formulas can be used in this case as well.

Let us focus now our attention on the case $X=C(K).$
Let $S \subseteq C(K)$ and consider a lattice Lipschitz operator $T:S \to C(K).$  Then there is a function $\varphi \in C(K)$ such that for every $w \in K$ and $f,g \in S,$ we have  
$$
|T(f)(w) - T(g)(w) | \le \varphi(w) \, |f(w)- g(w)|.
$$
This can be understood as a sort of uniform Lipschitz seminorm-type map as the ones defined above, for the case of all the seminorms defined by the set of Dirac's deltas $\{\delta_w\}_{w \in K} \subset C(K)^*.$ Write $ C(K)_w$ for the seminormed space $( C(K), |\langle \cdot, \delta_w\rangle|)$ and 
$S_w$ for $(S,  |\langle \cdot, \delta_w\rangle|).$
We can then consider the map  that leads every point $w \in K$ to a Lipschitz map  $P_T(w) \in  Lip(S_w, \mathbb R)$ given by 
$$
P_T(w)(f):=\big( T(f) \big)(w), \quad w \in K, \quad f \in S,
$$
for which the value of the map depends only on the real number $f(w),$ and not on $f$ itself, giving a ``diagonal description" of the operator $T.$
In fact, this representation can be pushed further: next result shows that \textit{every lattice Lipschitz operator is diagonal,} i.e. it can be written as a composition of the evaluation operator 
and the map 
$$
w \mapsto F_T(w) \in Lip(S(w), \mathbb R) \quad \text{ given by} \quad F_T(w)(v)=T(f)(w)
$$
for any
$f \in S$ such that $f(w)=v.$ This way of understanding the definition of the operator aproximates the notion of lattice Lipschitz operator to the papers of Appell et al. \cite{appell,appel2,appel3,appLib}, in which they are considered primarily as superposition operators. 
Note that the subset in the range
$$
S(w)=\big\{ r \in \mathbb R: \, \text{there is a function} \,\, f \in S \,\, \text{such that} \,\, f(w)=r \big\}
\subset \mathbb R
$$
is the image of a real-valued Lipschitz real function, that can be extended to all $\mathbb R.$ This will be needed in the proof of the Proposition \ref{propo1}.

\begin{remark}
Consider again the linear case (explained in Example \ref{exa1} (2)) as basic reference: in this case, a standard diagonal map $C(K) \to C(K)$ is defined as a multiplication operator defined by a continuous function. That is, given $h \in C(K),$ and taking into account that $C(K)$ is an algebra, we have that $L_h:C(K) \to C(K)$ given by $L_h(g)(w)= h(w) \cdot g(w)$ is a linear and continuous map with norm $\| L_h \| = \| h \|_{C(K)}.$  We can understand such operator as a map from $K$ to the set of a linear maps
from $\mathbb R$ to $\mathbb R,$ $K \ni w \mapsto \tau_w \in L(\mathbb R)$ in such a way that
$\tau_w (f(w)):= h(w) \cdot f(w),$ since  the elements of $ L(\mathbb R)$ are just multiplications by a real number, that is in this case just the value of $h(w).$ 

Let us write $\delta_w$ as the evaluation map $C(K) \ni f \mapsto  \delta_w(f)= f(w), $ $w \in K.$ 
Therefore, a multiplication operator can be seen as a composition (in fact, superposition)
$$
L_h(f)(w)= h(w) \cdot f(w)=\tau_w \circ \delta_w(f), \quad w \in K, \quad f \in C(K),
$$
where $\tau_w(r)= h(w) \cdot r,$ $r \in \mathbb R.$ Moreover, $L_\varphi$ can be seen as a function of $C(K, L \big( \mathbb R) \big) \equiv C(K, \mathbb R) = C(K),$ that is, $h.$
\end{remark}

Let us see in the next result that lattice Lipschitz operators play the same role as multiplication operators in the context of the Lipschitz maps.
Recall that $\delta_w$ is the evaluation map $C(K) \ni f \mapsto  \delta_w(f)= f(w), $ $w \in K,$ and for a lattice Lipschitz map $T,$
for every $w \in K$ and every $f,g \in S,$ we have  
$$
|T(f)(w) - T(g)(w) | \le \varphi(w) \, |f(w)- g(w)|,
$$ 
where $\varphi(w)$ is the (best) Lipschitz constant for the pointwise evaluation of the Lipschitz function $T$ at $w.$ Note that, if $\varphi$ is the minimal function satisfying the lattice Lipschitz inequalities, we have that $\sup_{w \in K} \varphi(w) = \| T \|_{\LLip}.$

\begin{theorem}   \label{propo2}
	Let $T:S \to C(K)$ be a lattice Lipschitz operator, $S \subseteq C(K).$ Then, 
	\begin{enumerate}[i)]
		\item
		for every $w \in K,$ the map $S \ni f \mapsto T(f)(w) \in \mathbb R$ factors as
		$$
		T(f)(w) = \phi_w \circ \delta_w(f), \quad f \in S,
		$$
		for a certain function $\phi_w \in \Lip(\mathbb R),$ and the map $K \ni w \mapsto \varphi(w) := \Lip(\phi_w)$ satisfies \eqref{eq:llip_inequality} and belongs to $B(K).$
		That is, for every $w$ there is a factorization as
		$$
		\large
		\xymatrix{
		S \subseteq C(K)	\,\,\, \ar[r]^{ \,\,\,\,\,T(\cdot)(w)}   \ar@{.>}[d]^{\delta_w}&  \,\,\, \mathbb R . \\
				 \mathbb R \ar[ur]_{ \phi_w}\\} 
		$$
		
		\item  $T$ can be identified with a vector valued bounded function  $\Phi_T \in B\big(K,\Lip(\mathbb R) \big)$ and $\varphi(w) = \| \Phi_T(w) \|_{\Lip(\mathbb R)}$.
	\end{enumerate}
\end{theorem}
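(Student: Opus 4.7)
The strategy is to exploit the lattice Lipschitz inequality pointwise: at each $w \in K$, the inequality $|T(f)(w)-T(g)(w)| \le \varphi_0(w)\,|f(w)-g(w)|$ (for any continuous bound $\varphi_0$, which exists by Remark \ref{remconstant}) forces the value $T(f)(w)$ to depend on $f$ only through the scalar $f(w)$. This yields a well-defined real function on the slice set $S(w) \subset \mathbb{R}$, which will turn out to be Lipschitz; extending it by McShane-Whitney produces the desired $\phi_w$.

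Concretely, fix $w \in K$. The plan is, first, to observe that whenever $f,g \in S$ satisfy $f(w) = g(w)$, the inequality forces $T(f)(w) = T(g)(w)$. This allows the definition $\tilde\phi_w : S(w) \to \mathbb R$, $\tilde\phi_w(r) := T(f)(w)$ for any $f \in S$ with $f(w) = r$. Second, I check directly that $\tilde\phi_w$ is Lipschitz on $S(w)$: for $r,s \in S(w)$, taking $f,g \in S$ with $f(w) = r$ and $g(w) = s$,
\begin{equation*}
|\tilde\phi_w(r) - \tilde\phi_w(s)| = |T(f)(w) - T(g)(w)| \le \varphi_0(w)\,|r-s|,
\end{equation*}
so $L_w := \Lip(\tilde\phi_w) \le \varphi_0(w)$. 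Third, apply the classical McShane-Whitney extension theorem to produce $\phi_w \in \Lip(\mathbb R)$ extending $\tilde\phi_w$ with $\Lip(\phi_w) = L_w$ (here the set $S(w)$, viewed inside $\mathbb R$, lives in a metric space for which the extension with preserved constant is standard). The factorization $T(f)(w) = \phi_w(\delta_w(f))$ then holds by construction. Setting $\varphi(w) := \Lip(\phi_w)$, the inequality
\begin{equation*}
|T(f)(w)-T(g)(w)| = |\phi_w(f(w))-\phi_w(g(w))| \le \varphi(w)\,|f(w)-g(w)|
\end{equation*}
is immediate, and $\varphi(w) \le \varphi_0(w) \le \|\varphi_0\|_{C(K)}$ gives $\varphi \in B(K)$.

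For part (ii), define $\Phi_T : K \to \Lip(\mathbb R)$ by $\Phi_T(w) := \phi_w$. Then by construction $\|\Phi_T(w)\|_{\Lip(\mathbb R)} = \Lip(\phi_w) = \varphi(w)$, and the previous step gives $\sup_{w \in K} \|\Phi_T(w)\|_{\Lip(\mathbb R)} \le \|\varphi_0\|_{C(K)} < \infty$, so $\Phi_T \in B(K,\Lip(\mathbb R))$. The identification of $T$ with $\Phi_T$ is the factorization established in (i), read as a map $w \mapsto \phi_w$.

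The main subtle point is ensuring that the pointwise Lipschitz constants $L_w$ (computed on the possibly small set $S(w)$) assemble into a genuinely bounded function of $w$. The continuous bound $\varphi_0$ from Remark \ref{remconstant} is exactly what makes this uniform in $w$; without invoking such a global continuous bound, there is no a priori control of $\sup_w L_w$. The McShane-Whitney step itself is routine, its only role being to produce a representative $\phi_w \in \Lip(\mathbb R)$ whose Lipschitz norm on $\mathbb R$ equals the intrinsic constant $L_w$ on $S(w)$, so that the assignment $w \mapsto \phi_w$ has the bound $\varphi(w)$ advertised in the statement.
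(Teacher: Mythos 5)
Your proof is correct and follows essentially the same route as the paper's: well-definedness of $\phi_w$ on $S(w)$ from the pointwise inequality, the Lipschitz estimate $\Lip(\tilde\phi_w)\le\varphi_0(w)$, McShane--Whitney extension to $\mathbb R$ with preserved constant, and boundedness of $w\mapsto\Lip(\phi_w)$ by $\|\varphi_0\|_{C(K)}$. The only cosmetic difference is that you invoke the constant bound from Remark \ref{remconstant} explicitly, whereas the paper works directly with the bound function $\varphi$ from the definition; the argument is otherwise identical.
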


\begin{proof}
	i) The first thing that needs a proof is that, for every $w \in K,$
	$T(f)(w)$ is independent of the function $f \in S$ whenever its evaluation takes the same value $f(w)=r$ for a certain $r \in S(w).$
	Fix $f \in S$ and $w \in K,$ and suppose that $g \in S$ satisfies that $f(w)=r=g(w).$ Using the lattice Lipschitz inequality for $T,$ we have that
	\begin{equation*}
		\big| T(f)(w)-T(g)(w) \big| \le \varphi(w) \, |f(w)-g(w)| = \phi(w) \cdot 0=0.
	\end{equation*}
	Therefore, $T(f)(w)$ is independent of $f,$ since it only depends on $f(w).$
	Thus, defining $\phi_w(r) = T(f)(w)$ (for any $f \in \C(K)$ such that $f(w) = r$), if $r,s \in S(w) \subset \mathbb R,$ we can select $f, g \in S$ such that $f(w) = r$ and $g(w) = s,$ consequently,
	\begin{align*}
		\big| \phi_w(r)- \phi_w(s)\big| & = \big| T(f)(w)-T(g)(w) \big| \le \varphi(w) \, | f(w)- g(w) | \\
		& = \varphi(w) \, | \delta_w(f)- \delta_w(g) | = \varphi(w) \, |r- s|.
	\end{align*}
	Consequently, $\phi_w$ is well-defined on $S(w)$, and its Lipschitz constant less or equal to $\varphi(w)$.
	In order to get that $\phi_w \in \Lip(\mathbb R),$ we need to see that $\phi_w$ is defined on all the points of $\mathbb R,$ and not only on $S(w).$ But this is a consequence of the McShane-Whitney Theorem, that assures that we can extend it to the whole metric space $\mathbb R$ preserving the Lipschitz constant that is less or equal to $\varphi(w).$ Any such an extension gives the desired factorization.
	  
	ii) Define $\Phi_T$ by $\Phi_T(w):=\phi_w.$ It is a well-defined vector valued function (with values in $\Lip(\mathbb R)$), and norm bounded on $K,$ since the function $w \mapsto \Lip(\phi_w)$ is by definition bounded by the constant $\|\varphi\|_{C(K)},$ where $\varphi$ is a bound function for $T.$
  
 \end{proof}

In Theorem \ref{propo2}, we cannot assert that $\Phi_T$ is a continuous function in general. If it is so, the corresponding lattice Lipschitz operators are easier to represent.

\begin{example}
Let $K = [-1,1].$
	\begin{enumerate}[i)]
		\item  Let $\phi_w(r) = 0$ for $w \leq 0, r \in \R$ and $\phi_w(r) = \max\{0, r - 1/w\}$ for $w > 0, r \in \R$. Consider $\Phi(w) = \phi_w$, obviously, $\Phi \not\in C\big(K, \Lip(\R)\big)$, but the resulting $T: \C(K) \to \C(K)$ given by 
		\begin{equation*}
			T(f)(w) = \phi_w\big(f(w)\big) =
			\left\{ \begin{array}{ll}
				0 & \text{if } w \leq 0 \\
				\max\{ 0, f(w) - \frac{1}{w} \} \  & \text{if } w > 0 \\
			\end{array}\right.
		\end{equation*}
		is a lattice Lipschitz operator.
		
		\item Furthermore, the above conditions does not characterize the set of lattice Lipschitz operators. For example, if $\phi_w = Id_{\R}$ for $w \leq 0$ and $\phi_w = 0$ for $w > 0$, then, the resulting $T$ satisfies the condition \eqref{eq:llip_inequality} but is not well defined from $\C(K)$ to $\C(K)$: $T(1) = \chi_{[-1,0]} \not\in \C(K)$.
	\end{enumerate}
\end{example}

However, in the general case there are still some continuity properties, that in fact characterize the functions representing lattice Lipschitz operators. Let us show them in the next result,
which, in particular, shows that the lattice Lipschitz operators on from subsets of $\C(K)$ to $\C(K)$ are superposition operators that satisfy the Lipschitz condition (see \cite{appell} for related results; see also Ch.6 in \cite{appLib} for the general setting). This is our main representation result.

\begin{theorem} \label{propo1}
	Let $T: \C(K) \to \C(K)$. $T$ is a lattice Lipschitz operator if and only if there exists a function $\psi: K \times \R \to \R$ such that for any $w \in K$
	\begin{equation}
		\label{eq:llip_psi}
		T(f)(w) = \psi \big( w, f(w) \big), \qquad \forall w \in K, \quad f \in \C(S),
	\end{equation}
	satisfying
	\begin{enumerate}[i)]
		\item $\psi(\cdot, r) \in \C(K)$ for any $r \in \R$,
		\item $\psi(w, \cdot) \in Lip(\R)$ for any $w \in K$,
		\item $\Lip \big( \psi(w, \cdot) \big)$ is bounded in $w \in K$.
	\end{enumerate}
	In this case, $\| T \|_{\LLip} = \sup_w \Lip \big( \psi(w, \cdot) \big) = \| \Phi_T \|_{B(K,\Lip(\R))}$, where $\Phi_t(w) = \psi(w,\cdot)$ as in the previous Theorem.
\end{theorem}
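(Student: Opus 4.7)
The plan is to prove the equivalence in two directions, leveraging Theorem \ref{propo2} heavily for the forward implication. For the forward direction (lattice Lipschitz operator $\Rightarrow$ $\psi$ exists), I would apply Theorem \ref{propo2} with $S = \C(K)$. Because $\C(K)$ contains all constant functions, the range set $S(w) = \R$ for every $w \in K$, so one can define $\psi(w,r) := \phi_w(r)$ globally with no need for a McShane--Whitney extension step. Conditions (ii) and (iii) then follow directly from the conclusion of Theorem \ref{propo2}; for (i), the key observation is that for any fixed $r \in \R$, the constant function $r\chi_K$ lies in $\C(K)$, so $\psi(\cdot, r) = T(r\chi_K) \in \C(K)$ by the hypothesis $T: \C(K) \to \C(K)$.

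For the backward direction, given $\psi$ satisfying (i)--(iii), set $T(f)(w) := \psi(w, f(w))$. The lattice Lipschitz inequality is immediate from (ii), and the constant $M := \sup_w \Lip(\psi(w, \cdot)) < \infty$ (by (iii)) yields $M\chi_K$ as a valid continuous functional bound in the sense of Remark \ref{remconstant}. The main obstacle I anticipate is verifying that $T(f) \in \C(K)$ rather than merely $B(K)$. I would handle this by inserting the intermediate value $\psi(w, f(w_0))$ into the triangle inequality:
\begin{equation*}
	|T(f)(w) - T(f)(w_0)| \le M\,|f(w)-f(w_0)| + |\psi(w, f(w_0)) - \psi(w_0, f(w_0))|.
\end{equation*}
The first summand vanishes as $w \to w_0$ by continuity of $f$ combined with (iii), while the second vanishes by applying (i) with the fixed real number $r = f(w_0)$. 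All three conditions on $\psi$ are essential at this step, which is the core of the argument.

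For the norm identities, the inequality $\|T\|_{\LLip} \le \sup_w \Lip(\psi(w, \cdot))$ is precisely the statement that $M\chi_K$ is an admissible continuous bound. The reverse inequality follows by evaluating any continuous $\varphi$ satisfying \eqref{eq:llip_inequality} on pairs of constant functions $r\chi_K, s\chi_K$: the pointwise inequality forces $|\psi(w,r) - \psi(w,s)| \le \varphi(w)|r-s|$, so $\Lip(\psi(w, \cdot)) \le \varphi(w) \le \|\varphi\|_\infty$ for every $w$; taking the supremum in $w$ and then the infimum over admissible $\varphi$ delivers $\sup_w \Lip(\psi(w, \cdot)) \le \|T\|_{\LLip}$. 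The final equality $\sup_w \Lip(\psi(w, \cdot)) = \|\Phi_T\|_{B(K,\Lip(\R))}$ is then immediate from the definition of the supremum norm on $B(K, \Lip(\R))$ and the identification $\Phi_T(w) = \psi(w, \cdot)$ supplied by Theorem \ref{propo2}.
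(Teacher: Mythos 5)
Your proposal is correct and follows essentially the same route as the paper: the forward direction reads $\psi(w,r)=\phi_w(r)$ off Theorem \ref{propo2} (with your remark that $S(w)=\mathbb R$ via constant functions, so condition i) reduces to $T(r\chi_K)\in \C(K)$, matching the paper's choice of $f$ with $f(w)=f(k)=r$), and the backward direction uses the same two-term triangle-inequality splitting of $\psi(w,f(w))-\psi(w_0,f(w_0))$ to get continuity of $T(f)$, followed by the constant bound $M\chi_K$. The norm identities are also argued as in the paper, with your two-sided inequality via constant functions being a slightly more explicit version of the paper's double-supremum computation.
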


\begin{proof}
	The proof of the existence of $\psi$ is a direct consequence of the proof of Theorem \ref{propo2}, letting $\psi(w,r) = \phi_w(r)$.
	Let $w, k \in K$ and $r \in \R$. Choose $f \in \C(S)$ such that $f(w) = f(k) = r$,
	\begin{equation*}
		| \psi(w,r) - \psi(k,r) | = | T(f)(w) - T(f)(k) |,
	\end{equation*}
	and, as a consequence of the continuity of $T(f)$, $\psi(\cdot,r) \in \C(K)$.
	Now, let $w \in K$ and $r, s \in \R$. Then, letting $f, g \in \C(K)$ being any pair of functions such that $f(w) = r$ and $g(w) = s$,
	\begin{equation*}
		| \psi(w,r) - \psi(w,s) | = | T(f)(w) - T(g)(w) | \leq \varphi(w) \cdot | r - s |,
	\end{equation*} 
	so  $ \psi(w, \cdot)$ is Lipschitz and $ \Lip \big( \psi(w, \cdot) \big) \leq \varphi(w) $. This proves \textit{i), ii)} and \textit{iii).} 
	
	For the opposite, assume that $\psi$ satisfies the hypothesis. To show that $T: \C(K) \to \C(K)$ is well defined, fix $f \in \C(K)$ and $w \in K$. Let $k \in K$,
	\begin{align*}
		| T(f)(w) - T(f)(k) |
		& \leq | \psi(w,f(w)) - \psi(k,f(w)) | + | \psi(k,f(w)) - \psi(k,f(k)) | \\
		& \leq | \psi(w,f(w)) - \psi(k,f(w)) | + \Lip \big( \psi(k, \cdot) \big) | f(w) - f(k) |.
	\end{align*}
	The continuity of $\psi(\cdot,f(w))$ and $\sup_k \Lip \big( \psi(k, \cdot) \big) < + \infty$, implies the continuity of $T(f)$ in $w$.
	It remains to show that $T$ is a lattice Lipschitz operator. But this is a direct consequence of the following simple computations. Consider the real function $\varphi: K \to \mathbb R$ given by $\varphi(w) := \sup_k \Lip \big( \psi(k, \cdot) \big),$ $w \in K$. Let $f, g \in \C(K)$ and $w \in K.$ Then
	\begin{align*}
		| T(f)(w) - T(g)(w) |
		& = | \psi(w, f(w)) - \psi(w, g(w)) | \\
		& \leq \Lip \big( \psi(w, \cdot) \big) \cdot | f(w) - g(w) | \\
		& \leq \varphi(w) \cdot | f(w) - g(w) |,
	\end{align*}
what gives the result.

For the equivalence of the norms,
\begin{align*}
	\| T \|_{\LLip} & = \inf \left\{ Q > 0 : \ | T(f) - T(g) | \leq Q | f - g | \ \forall f, g \in \C(K) \right\} \\
	& = \sup_{w \in K} \sup \left\{ \dfrac{ | T(f)(w) - T(g)(w) | }{ | f(w) - g(w) | } : \ f, g \in \C(K), \ f(w) \neq g(w) \right\} \\
	& = \sup_{w \in K} \sup \left\{ \dfrac{ | \psi(w,r) - \psi(w,s) | }{ | r - s | } : \ r, s \in \R, \ r \neq s \right\} \\
	& = \sup_{w \in K} \Lip \psi(w,\cdot) = \sup_{w \in K} \Lip \Phi_T(w).
\end{align*}
\end{proof}


\section{Applications: diagonal Lipschitz operators on $\C(K)-$spaces}   \label{S4}

We have shown in the previous section that the main feature of lattice Lipschitz maps is that they are ``diagonal'', and we have explained in what sense this statement holds.  We will show in this section three applications, which have as a common interest to exploit this idea, by drawing an extended picture of the setting of multiplication operators on $\C(K)-$spaces but for Lipschitz operators.


\subsection{Algebra structure of the lattice Lipschitz maps on $\C(K)$}

As we have discussed, there is a parallelism between the Lipschitz lattice operators (in the Lipschitz environment) and the multiplication operators (in the linear environment) from $C(K)$ to $C(K).$ The set of multiplication operators $M_h :\C(K) \to \C(K)$ defines a unital algebra with the dot product, as a consequence of the algebra structure of $\C(K)$. The same situation appears for the space $\LLip(\C(K)),$ in this case with superposition as internal operation, giving a (noncommutative) unital algebra. This can be easily proved as a consequence of Theorem \ref{propo2}. Recall from this result that for every lattice Lipschitz map $T:\C(K) \to \C(K)$ we find a map $w \mapsto  \phi_w \in Lip(\mathbb R)$ that gives a vector valued function
$\Phi:K \to Lip(\mathbb R)$  such that for every $w \in K,$ $\Phi(w)=\phi_w(f(w).$ This easily gives the next result.

\begin{corollary}
Let $K$ be a compact Hausdorff space and consider the  vector-valued constant function $\Phi_{Id}:K \to Lip(\mathbb R)$ given by   $\Phi_{Id}(w):=Id_{\mathbb R}$ for all $w \in K,$ where $Id_{\mathbb R}$ is the identity function on $\mathbb R.$ Then
\begin{enumerate}[i)]
\item If $T_1$ and $ T_2$ are lattice Lipschitz maps on $C(K)$ with associated bounded vector-valued functions
$\Phi_{T_1}$ and $\Phi_{T_2},$ we have the pointwise inequality
$$
\| \Phi_{T_2 \circ T_1}(w) \|_{Lip(\mathbb R)} \le \| \Phi_{T_2}(w) \|_{Lip(\mathbb R)} \cdot \| \Phi_{T_1}(w) \|_{Lip(\mathbb R)}
$$
for every $w \in K.$

\item The space $LLip_0(C(K))$ with the lattice Lipschitz norm is a (non-commutative) unital Banach algebra under composition, in which the identity element is given by the function $\Phi_{Id}$ described above.
\end{enumerate}
\end{corollary}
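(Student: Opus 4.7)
The plan is to reduce both claims to the pointwise representation developed in Section \ref{S3}. To each $T \in \LLip(\C(K))$ we attach the vector-valued function $\Phi_T : K \to \Lip(\mathbb R)$ with $\Phi_T(w) = \phi^T_w$ and $T(f)(w) = \phi^T_w(f(w))$ via Theorem \ref{propo2}, and Theorem \ref{propo1} then provides the identity $\|T\|_{\LLip} = \sup_{w \in K} \|\Phi_T(w)\|_{\Lip(\mathbb R)}$. The strategy is to show that the representing function of a composition is the pointwise composition of representing functions, and then to invoke the familiar submultiplicativity of Lipschitz constants on $\mathbb R$.

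For part (i), I would fix $w \in K$ and unfold the definitions:
$$
(T_2 \circ T_1)(f)(w) = \phi^{T_2}_w\bigl(T_1(f)(w)\bigr) = \phi^{T_2}_w\bigl(\phi^{T_1}_w(f(w))\bigr),
$$
identifying $\Phi_{T_2 \circ T_1}(w) = \phi^{T_2}_w \circ \phi^{T_1}_w$ as an element of $\Lip(\mathbb R)$; the asserted pointwise bound is then the standard inequality $\Lip(\phi \circ \psi) \leq \Lip(\phi)\Lip(\psi)$ for real Lipschitz functions. As a byproduct, the map $w \mapsto \Lip(\phi^{T_2}_w)\Lip(\phi^{T_1}_w)$ is uniformly bounded on $K$ by $\|T_2\|_{\LLip}\|T_1\|_{\LLip}$, so Remark \ref{remconstant} places $T_2 \circ T_1$ in $\LLip(\C(K))$, and vanishing at $0$ is immediate from the assumption on $T_1, T_2$, putting the composition in $\LLip_0(\C(K))$.

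For part (ii), completeness is Proposition \ref{propo:llip_banach}; taking the supremum over $w$ in the inequality from (i) and invoking the norm formula from Theorem \ref{propo1} yields the submultiplicative bound $\|T_2 \circ T_1\|_{\LLip} \leq \|T_2\|_{\LLip}\|T_1\|_{\LLip}$. Associativity of $\circ$ is automatic, and the identity map $Id_{\C(K)}$ corresponds to the constant representing function $w \mapsto Id_{\mathbb R} = \Phi_{Id}(w)$, has $\|Id_{\C(K)}\|_{\LLip} = 1$, and is trivially a two-sided unit for composition. No serious obstacle is anticipated: once the identification $\Phi_{T_2 \circ T_1} = \Phi_{T_2} \circ \Phi_{T_1}$ from (i) is secured, every piece of the Banach-algebra structure reduces to a pointwise scalar statement on $\mathbb R$ or to the general machinery of Section \ref{S3}.
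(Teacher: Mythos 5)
Your proposal is correct and takes essentially the same route as the paper's own proof: both identify $\Phi_{T_2 \circ T_1}(w)$ with the pointwise composition $\phi^{T_2}_w \circ \phi^{T_1}_w$, invoke the submultiplicativity $\Lip(\phi\circ\psi)\le \Lip(\phi)\cdot\Lip(\psi)$ of Lipschitz constants on $\mathbb R$, take the supremum over $w$ for the norm inequality, and cite the completeness result for the Banach-algebra conclusion. Note that since $S=\C(K)$ gives $S(w)=\mathbb R$, the representing function $\Phi_{T_2\circ T_1}(w)$ is determined on all of $\mathbb R$, so your identification carries no extension ambiguity.
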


\begin{proof}
The space is normed and complete by Proposition \ref{propo:llip_banach}.
Consider a pair of lattice Lipschtz operators $T_1$ and $T_2$, that can be identified with bounded funtions $\Phi_{T_1}$ and $\Phi_{T_2}$ in $B(K,\Lip(\mathbb R)).$
Clearly, $T_2 \circ T_1 : \C(K) \to \C(K)$.
These functions are defined pointwise in $w \in K$ by the Lipschitz functions $\Phi_{T_j}(w):= \phi^j_w:\mathbb R \to \mathbb R,$ $j=1,2.$
Then $\phi^2_w \circ \phi^1_w$ is a Lipschitz function on $\mathbb R.$ A direct calculation shows that $\Lip \big( \phi^2_w \circ \phi^1_w \big) \le \Lip( \phi^2_w ) \cdot \Lip( \phi^1_w),$  and for every $w \in K,$ 
\begin{equation*}
	(T_2 \circ T_1) (f)(w)= \phi^2_w \big(  \phi^1_w( f(w)) \big)= \big(\phi^2_w \circ \phi^1_w \big)( f(w)) =
	\Phi_{T_2 \circ T_1}(w) (f(w)).
\end{equation*}
This shows that $T_2 \circ T_1$ is a lattice Lipschitz operator. The same direct computations that show the  inequalities for the Lipschitz norm of  the  pointwise defined composition 
$\phi^2_w \circ \phi^1_w$ give also
$$
\| T_2 \circ T_1 \|_{\LLip(C(K))}  \leq \| T_2  \|_{\LLip(C(K))} \cdot \| T_1 \|_{\LLip(C(K))}.
$$
Finally, it is easy to see that the identity map is represented by the vector valued function  $\Phi_{Id}(\cdot)=Id_{\mathbb R},$ and it defines a unit for the space of lattice Lipschitz operators.  
\end{proof}


\subsection{Dual spaces of spaces of lattice Lipschitz operators} 

Let us show a first application of the representation results for lattice Lipschitz operators on $C(K)-$spaces as vector-valued functions. 
We have shown in Section \ref{S3} that we can always find a bound function $\varphi$ that is continuous, although the minimal one need not be so. In this sense, we can consider the subspace $L_c Lip(C(K))$ of lattice Lipschitz maps $T$ having such that the representing function
$\Phi_T$  provided by Theorem \ref{propo2}  belongs to $\C(K,Lip(\mathbb R)),$ and hence with a  continuous minimal bound function. 
Using classical representations of spaces vector-valued functions, we can also give a description of the dual spaces of certain subspaces of $LLip(C(K)).$ The equivalent results for the case of linear maps are quite simple, but provide a good benchmark: in the linear case, diagonal maps are functions of $C(K)$ and their norms are exactly the $C(K)-$norm of the associated continuous function. So, by Riesz's Theorem, its dual space is just the space $\mathcal M(K)$ of Borel regular measures on $K,$ and the dual action of a linear diagonal operator
$T_g$ with such a measure $\mu$ is the integral
$$
\big\langle T_g, \mu \big\rangle = \int_K g \, \, d \mu.
$$
Let us show that, in this case, the dual space can also be described by a (rather more complicated) integral, preserving in a sense the same spirit, although we are now talking about the (much) larger class of diagonal Lipschitz maps.

So, let us center our attention on the subspace $L_c Lip(C(K))\hookrightarrow LLip(C(K))$ endowed with the same norm,and its  identification  with  $C \big(K,Lip(\mathbb R \big) \big)$ provided \textit{via} Theorem \ref{propo2} and Theorem \ref{propo1}.  A space of Banach-space-valued continuous on a compact set can always be written as the completion of an injective tensor product (see for example \cite[4.2(2)]{deflo}, and so we have that
$$
L_c Lip(C(K)) = C \big(K,Lip(\mathbb R \big) \big)= C(K) \widehat{\otimes}_\varepsilon Lip(\mathbb R).
$$
Therefore, each such a lattice Lipschitz operator can be approximated with respect to the $\varepsilon$ tensor norm by a sequence of simple tensors $t=\sum_{i=1}^n f_i \otimes \phi_i,$
that acts as a Lipschitz operator $T_t:C(K) \to C(K)$ by the formula
$$
T_t(f)(w)= \sum_{i=1}^n f_i(w) \cdot \phi_i(f(w)), \quad f \in C(K), \,\,\, \, w \in K.
$$

In particular, this representation  provides a description of the dual space. Indeed, recall that every element of the dual space of an injective tensor product as
$C(K) \otimes_\varepsilon \Lip(\mathbb R) $ can be written as an \textit{integral bilinear form} \cite[Theorem 4.6]{deflo}. This gives the following.

\begin{remark}
	Consider the identification 
$		\Lambda : \C(K) \otimes \Lip(\R)  \to \LLip(\C(K)) $ given by
$$
		u = \sum_{i=1}^n f_i \otimes \phi_i  \mapsto \Lambda(u) : f \mapsto \left( w \mapsto \sum_{i=1}^n f_i(w) \cdot \phi_i(f(w)) \right).
$$
	Then, $\Lambda(u)(f)(w) = \psi_u(w,f(w))$, where $\psi_u(w,r) = \sum_{i=1}^n f_i(w) \cdot \phi_i(r)$, so by Theorem \ref{propo1}, $\Lambda$ is well defined.
	Clearly, it is a linear mapping. Let us show that it preserves the injective tensor norm in $\C(K) \otimes_\varepsilon \Lip(\R)$. Indeed, we have that
	\begin{align*}
		\| \Lambda(u) \|_{\LLip} & = \sup_{w \in K} \Lip \big( \psi(w, \cdot) \big)
		= \sup_{w \in K} \Lip \left( \sum_{i=1}^n f_i(w) \cdot \phi_i(\cdot) \right) \\
		& = \| u \|_{C(K,\Lip(\R))} = \varepsilon(u).
	\end{align*}

	Then, $\Lambda$ can be extended by continuity to the completion of the injective tensor product,
	\begin{equation*}
		\Lambda : \C(K) \hat\otimes_\varepsilon \Lip(\R) \to \LLip(\C(K)),
	\end{equation*}
	which allows to identify isometrically $\C(K) \widehat\otimes_\varepsilon \Lip(\R)$ with $\LcLip(\C(K))$, the image of $\Lambda$, as a subset of $\LLip(\C(K))$.
%
\end{remark}


\begin{corollary}
Every element  $\psi \in  \big( C(K) \otimes_\varepsilon Lip(\mathbb R) \big)^* = L_c Lip(C(K))^*$ can be written as a bilinear integral, i.e. there is  a Borel-Radon measure $\eta$  on $B_{\mathcal M(K)} \times B_{Lip(\mathbb R)^*}$ such that for every $t \in C(K) \otimes_\varepsilon Lip(\mathbb R),$
$$
\langle  t, \psi \rangle = \int_{B_{\mathcal M(K)} \times B_{Lip(\mathbb R)^*}} \big\langle \, t \, , \,  \mu \otimes \tau \big\rangle \,\,\, d \eta(\mu,\tau).
$$
where $ \big\langle \, t \, , \,  \mu \otimes \tau \big\rangle =  \int_K f \, d \mu \cdot \langle \tau,\phi\rangle$
for simple tensors as $t= f \otimes \phi \in C(K) \otimes Lip(\mathbb R).$
\end{corollary}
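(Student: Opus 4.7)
The plan is to invoke the classical integral representation of the dual of an injective tensor product (see \cite[Theorem 4.6]{deflo}), once we identify $\LcLip(\C(K))$ with the completed injective tensor product $\C(K) \widehat\otimes_\varepsilon \Lip(\R)$, as established in the previous Remark. Under this identification, every continuous linear functional on $\C(K) \widehat\otimes_\varepsilon \Lip(\R)$ is an integral bilinear form on $\C(K) \times \Lip(\R)$, and the cited result states that any such form can be written as an integral against a Radon measure on the product of the unit balls of the duals, both equipped with their weak-star topologies. The strategy therefore reduces to specializing that general theorem with $X = \C(K)$ and $Y = \Lip(\R)$, and then verifying that the integrand coincides with the pairing $\langle t, \mu \otimes \tau \rangle$ stated in the corollary.

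First I would record that, by the Riesz representation theorem, $\C(K)^* = \mathcal M(K)$, and that both $B_{\mathcal M(K)}$ and $B_{\Lip(\R)^*}$ are weak-star compact by the Banach--Alaoglu theorem. Their cartesian product is therefore a compact Hausdorff space in the product of weak-star topologies, on which Borel--Radon measures make sense. Applying \cite[Theorem 4.6]{deflo} then yields, for every $\psi \in \big( \C(K) \otimes_\varepsilon \Lip(\R) \big)^*$, a regular Borel measure $\eta$ on $B_{\mathcal M(K)} \times B_{\Lip(\R)^*}$ such that, on a simple tensor $t = f \otimes \phi$,
$$
\langle t, \psi \rangle = \int_{B_{\mathcal M(K)} \times B_{\Lip(\R)^*}} \Big( \int_K f \, d\mu \Big) \cdot \langle \tau, \phi \rangle \, d\eta(\mu,\tau),
$$
which is exactly the integral of the map $(\mu,\tau) \mapsto \langle t, \mu \otimes \tau \rangle$ against $\eta$.

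Finally, by bilinearity the formula extends to the whole algebraic tensor product $\C(K) \otimes \Lip(\R)$, and by continuity to its completion. The required continuity follows from the estimate $|\langle t, \mu \otimes \tau \rangle| \leq \varepsilon(t)$ valid for $(\mu,\tau)$ in the unit balls, which yields
$$
\bigg| \int \langle t, \mu \otimes \tau \rangle \, d\eta(\mu,\tau) \bigg| \leq \|\eta\| \cdot \varepsilon(t);
$$
so both sides of the identity are $\varepsilon$-continuous in $t$ and agree on the dense subspace of algebraic tensors. The only delicate point is to recognize that the hypotheses of the general representation theorem are satisfied in our setting, but this is immediate once the identifications $\C(K)^* = \mathcal M(K)$ and $Y^* = \Lip(\R)^*$ are in place; beyond that, the result is a direct specialization of the classical statement.
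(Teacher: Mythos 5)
Your proposal is correct and follows essentially the same route as the paper: identify $L_c Lip(C(K))$ with $C(K) \widehat{\otimes}_\varepsilon Lip(\mathbb R)$ via the preceding remark and then invoke the Defant--Floret representation of elements of the dual of an injective tensor product as integral bilinear forms, i.e.\ as integrals against a Radon measure on the product of the weak-star compact dual unit balls. The paper treats the corollary as an immediate consequence of that citation and gives no further argument, so your added details (Banach--Alaoglu, verification on simple tensors, extension by density and $\varepsilon$-continuity) simply make explicit what the paper leaves implicit.
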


Using well-known notions of tensor product representations of operator ideals (\cite[Proposition 10.1]{deflo}), we can write the same result in other words as

\begin{corollary}
$L_c Lip(C(K))^* = \mathcal I \big( C(K), Lip(\mathbb R)^* \big)$ isometrically, where $\mathcal I$ denotes the ideal of the integral operators.
\end{corollary}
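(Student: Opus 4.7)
The plan is a direct two-step application of the identifications already set up in the preceding remark and the classical one cited in the statement, so no novel argument is needed. First, I would invoke the isometric isomorphism $\Lambda : C(K) \widehat{\otimes}_\varepsilon \Lip(\mathbb R) \to \LcLip(C(K))$ established in the preceding remark, where the equality of norms $\|\Lambda(u)\|_{\LLip} = \varepsilon(u)$ was explicitly verified. Dualizing $\Lambda$ reduces the problem to computing the dual of the completed injective tensor product $C(K) \widehat{\otimes}_\varepsilon \Lip(\mathbb R)$.

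Second, I would apply the classical representation of the dual of a completed injective tensor product as the space of integral operators, namely $(X \widehat{\otimes}_\varepsilon Y)^* = \mathcal{I}(X, Y^*)$ isometrically, which is precisely Proposition 10.1 in Defant-Floret as cited in the statement. Under this identification, a functional $\psi \in (X \widehat{\otimes}_\varepsilon Y)^*$ corresponds to the integral operator $S_\psi : X \to Y^*$ defined on elementary tensors by $\langle S_\psi(x), y \rangle = \psi(x \otimes y)$, with the integral norm of $S_\psi$ equal to the dual norm of $\psi$. Specialising to $X = C(K)$ and $Y = \Lip(\mathbb R)$ yields the stated equality $\LcLip(C(K))^* = \mathcal{I}(C(K), \Lip(\mathbb R)^*)$.

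Since both steps are isometric, no genuine obstacle arises: the corollary is essentially a translation of the preceding bilinear-integral representation into operator-ideal language, where the measure $\eta$ on $B_{\mathcal M(K)} \times B_{\Lip(\mathbb R)^*}$ appearing in the previous corollary is packaged, via a Pettis-type integration, into the integral operator $S_\psi : C(K) \to \Lip(\mathbb R)^*$. The only point that would require a line of verification is that the composition of the two isometric identifications respects the isometric structure on both sides, which is automatic from the norm equalities already recorded.
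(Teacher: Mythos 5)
Your proposal is correct and follows essentially the same route as the paper: the corollary is obtained by combining the isometric identification $L_cLip(C(K)) = C(K)\widehat{\otimes}_\varepsilon \Lip(\mathbb R)$ from the preceding remark with the classical duality $(X\widehat{\otimes}_\varepsilon Y)^* = \mathcal I(X,Y^*)$ from Defant--Floret. The paper treats this as an immediate restatement of the previous corollary in operator-ideal language, exactly as you describe.
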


Recall that the representation has to be done taking into account the identification of operators with functions. That is,  every element $T$ of $L_c Lip(C(K))$ is considered  as a continuous vector valued function $w \mapsto \Phi_T(w) \in L(\mathbb R),$ which acts on a continuous function $f \in C(K)$ as $T(f)(w)=\Phi_T(w)(f(w)),$ $w \in K,$ and $\Phi_T$ can be approximated in the norm $\varepsilon$ by simple tensors  like the tensor $t$ written above.

 
\subsection{Extension of lattice Lipschitz maps}  

Let us finish the paper with an application regarding a vector valued version of the classical  McShane-Whitey extension theorem for real valued Lipschitz maps.
In this section we show that it is possible to obtain a well-defined extension $\tilde{T}:C(K) \to C(K)$ of a lattice Lipschitz  operator $T:S \to C(K)$ provided that we use a continuous function as  bound function $\varphi$ for $T.$ We focus attention on this result because of the original motivation of this paper, which was to find good extension theorems for this class of maps in order to apply them in the context of Reinforcement Learning in Artificial Intelligence. In this section we show that this is always possible.

Let $K$ be a compact metric space and $S \subseteq \C(K)$. Recall that we say that $T : S \to \C(K)$ is a lattice Lipschitz operator if there exists $\varphi \in \C(K)$ such that for each $f, g \in S$, $| T(f) - T(g) | \leq \varphi \cdot | f - g |$.
However, in general we cannot assure that such a continuous $\varphi$ is the minimal function satisfying the inequality, as explained in Section \ref{S2}.  

Let us write $\mathcal F(K)$ for the space of all the real functions on $K.$
The (pointwise) McShane extension of $T$ to $C(K)$ is defined as the function $\hat T: \C(K) \to \mathcal F(K)$ given by
\begin{equation*}
	\hat T(f)(w) = \sup \{ T(g)(w) - \varphi(w) | g(w) - f(w) | : \, g \in S \}, \quad w \in K.
\end{equation*}
Note that this formula does not necessarily define, a priori,  a function in $\C(K)$ for every continuous $f.$
But next result shows that it provides a well-defined lattice Lipschitz operator from $\C(K)$ to $\C(K)$ with the expected properties. 

\begin{theorem}  \label{extcont}
	Let $S \subseteq \C(K)$ and let  $T : S \to \C(K)$ be a lattice Lipschitz operator with associate function $\varphi \in \C(K)$. Then, the pointwise McShane extension $\hat T : \C(K) \to \C(K)$ is a well-defined lattice Lipschitz extension of $T$ to $\C(K)$ with the same associate function, $\varphi$.
\end{theorem}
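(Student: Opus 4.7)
The plan is to verify three properties of $\hat T$: (a) $\hat T|_S = T$; (b) $\hat T$ satisfies the pointwise lattice Lipschitz inequality with the same bound $\varphi$; and (c) $\hat T(f) \in \C(K)$ for every $f \in \C(K)$. Parts (a) and (b) are routine consequences of the lattice Lipschitz property of $T$ combined with the triangle inequality, while (c) is the main obstacle. For (a), fixing $f \in S$, the choice $g = f$ inside the supremum gives $\hat T(f)(w) \ge T(f)(w)$, and for any $g \in S$ the hypothesis $T(g)(w) \le T(f)(w) + \varphi(w)|g(w) - f(w)|$ yields the reverse inequality. For (b), the triangle inequality $|g(w) - f_1(w)| \ge |g(w) - f_2(w)| - |f_1(w) - f_2(w)|$, applied inside the supremum and combined with symmetry in $f_1, f_2$, produces $|\hat T(f_1)(w) - \hat T(f_2)(w)| \le \varphi(w)|f_1(w) - f_2(w)|$. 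An analogous estimate against a fixed reference $g_0 \in S$ also shows $\hat T(f)(w)$ is finite.

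For (c), the natural route is via Theorem \ref{propo1}. Since the expression $T(g)(w) - \varphi(w)|g(w) - f(w)|$ depends on $g$ only through $g(w)$, one sets $\psi(w, r) := \sup_{g \in S}\bigl[T(g)(w) - \varphi(w)|g(w) - r|\bigr]$ and observes $\hat T(f)(w) = \psi(w, f(w))$. Conditions (ii) and (iii) of Theorem \ref{propo1} are immediate, because $\psi(w, \cdot)$ is nothing but the real-valued McShane extension to $\R$ of the Lipschitz function $\phi_w : S(w) \to \R$ from Theorem \ref{propo2}, so $\Lip(\psi(w, \cdot)) \le \varphi(w) \le \|\varphi\|_\infty$. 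Thus everything reduces to condition (i): for each fixed $r \in \R$, showing that $w \mapsto \psi(w, r)$ is continuous on $K$.

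The function $\psi(\cdot, r)$ is automatically lower semi-continuous, being a pointwise supremum of continuous functions $w \mapsto T(g)(w) - \varphi(w)|g(w) - r|$ (each of $T(g)$, $\varphi$ and $g$ being continuous). The main obstacle is upper semi-continuity at an arbitrary $w_0 \in K$: for $w$ near $w_0$ one must prevent distant near-maximizers $g_w \in S$ from inflating $\psi(w, r)$ above $\psi(w_0, r)$. The key auxiliary bound, obtained from the lattice Lipschitz inequality and the triangle inequality, is $T(g)(w) - \varphi(w)|g(w) - r| \le T(g_0)(w) + \varphi(w)|g_0(w) - r|$ for every $g_0 \in S$, which majorizes $\psi(\cdot, r)$ by an upper semi-continuous infimum of continuous functions. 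The delicate remaining step, where I expect the real technical work and where the continuity hypothesis $\varphi \in \C(K)$ will be essential, is to show that a near-maximizer $g^*$ of $\psi(w_0, r)$ furnishes, for every $w$ in a suitable neighborhood of $w_0$, an approximate maximizer of $\psi(w, r)$ whose value is close to $\psi(w_0, r)$; this would be executed by an $\varepsilon$-$\delta$ argument exploiting the simultaneous continuity of $T(g^*)$, $\varphi$ and $g^*$ at $w_0$.
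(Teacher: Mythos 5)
Your parts (a) and (b) are correct and coincide with the paper's argument, and your reduction of (c) to condition (i) of Theorem \ref{propo1} (continuity of $w \mapsto \psi(w,r)$ for each fixed $r$) is a legitimate reformulation. The gap is that you never prove that condition: you rightly observe that lower semicontinuity is free and that the whole difficulty is upper semicontinuity, but the ``delicate remaining step'' you describe is precisely the content of the theorem and is left as a plan. Moreover, the plan as stated (a near-maximizer $g^*$ at $w_0$ furnishes approximate maximizers at nearby $w$) points in the wrong direction: evaluating a \emph{fixed} $g^*$ at nearby points only reproduces lower semicontinuity, whereas upper semicontinuity requires controlling near-maximizers $g_w$ that change with $w$, and nothing in the hypotheses gives equicontinuity of the family $w \mapsto T(g)(w)-\varphi(w)|g(w)-r|$ over $g\in S$. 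Being sandwiched between your lsc minorant and your usc majorant does not yield continuity either.

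In fact the missing step cannot be supplied in general. Take $K=[0,1]$, $\varphi\equiv 1$, $g_n(w)=\max\{0,\,1-n^2|w-1/n|\}$ for $n\ge 2$, $S=\{g_n : n\ge 2\}$ and $T(g_n)=g_n$ (the restriction of the identity, clearly lattice Lipschitz with bound $\varphi$). For the constant function $f\equiv 1$ one gets $\hat T(f)(w)=\sup_n\{g_n(w)-|g_n(w)-1|\}=2\sup_n g_n(w)-1$, which equals $-1$ at $w=0$ but equals $1$ at every $w=1/n$, hence is not continuous; so $\hat T$ does not map $\C(K)$ into $\C(K)$ here, and some extra hypothesis (finiteness or equicontinuity of $S$ and $T(S)$, say) is needed. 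For what it is worth, the paper's own proof stumbles at the same point: its near-maximizer $g_0$ depends on the second point $k$, yet the neighborhood $\mathcal N$ of $w$ is chosen afterwards using the continuity of $T(g_0)$ and $\varphi\cdot g_0$, which is circular. So your instinct that the real technical work is concentrated in that one step was correct, but the proposal does not carry it out, and as stated it cannot be carried out.
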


\begin{proof} The proof follows the lines of the classical one for real functions; we write it for the aim of completeness.
	Let $f \in S$ and $w \in K$. Then, clearly $\hat T(f)(w) \geq T(f)(w)$ and
	\begin{equation*}
		\hat T(f)(w) - T(f)(w) = \sup \{ T(g)(w) - \varphi(w) \cdot | g(w) - f(w) | - T(f)(w) : \, g \in S \} \leq 0,
	\end{equation*}
	so $\hat T$ extends $T$. To show that the lattice Lipschitz associate function is preserved, let $f, g \in \C(K)$ and $w \in K.$ Then
	\begin{align*}
		| T(f)(w) - T(g)(w) |
		& \leq \Big| \sup_{h \in S} \{ T(h)(w) - \varphi(w) \cdot | h(w) - f(w) | \\
		& \hspace{2cm} - T(h)(w) + \varphi(w) \cdot  | h(w) - g(w) | \} \Big| \\
		& \leq \varphi(w) \cdot | f(w) - g(w) |.
	\end{align*}
	
	It remains to prove that $\hat T(f) \in \C(K)$ for all $f \in \C(K)$. Let $f \in \C(K)$ and $\varepsilon > 0$. Fix $w \in K.$ Then, for every $k \in K$,
	\begin{align*}
		| \hat T(f)(w) - \hat T(f)(k) |
		& \leq \Big| \sup_{g \in S} \{ T(g)(w) - \varphi(w) \cdot | g(w) - f(w) | \\
		& \hspace{2cm} - T(g)(k) + \varphi(k) \cdot | g(k) - f(k) | \} \Big| \\
		& \leq \Big| \sup_{g \in S} \{ | T(g)(w) - T(g)(k) | \\
		& \hspace{2cm} + | \varphi(w) g(w) - \varphi(w) f(w) - \varphi(k) g(k) + \varphi(k) f(k) | \} \Big| \\
		& \leq \sup_{g \in S} \{ | T(g)(w) - T(g)(k) | + \\
		& \hspace{2cm} + | \varphi(w) g(w) - \varphi(k) g(k) | + | \varphi(w) f(w) - \varphi(k) f(k) | \}.
	\end{align*}
	Now, note that there exists $g_0 \in S$ such that
	\begin{align*}
		| \hat T(f)(w) - & \hat T(f)(k) | \leq | T(g_0)(w) - T(g_0)(k) | \\
		& + | \varphi(w) g_0(w) - \varphi(k) g_0(k) | + | \varphi(w) f(w) - \varphi(k) f(k) | + \varepsilon
	\end{align*}
	and, by the continuity of the functions $T(g_0), \varphi \cdot g_0$ and $\varphi \cdot f$, there is a  neighborhood $\mathcal N \subset K$ of $w$
for which $k \in \mathcal N$ implies $ | T(g_0)(w) - T(g_0)(k) | \leq \varepsilon, | \varphi(w) g_0(w) - \varphi(k) g_0(k) | \leq \varepsilon,$ and $ | \varphi(w) f(w) - \varphi(k) f(k) | \leq \varepsilon $.
	As a consequence, if $k  \in \mathcal N,$
	\begin{equation*}
		| \hat T(f)(w) - \hat T(f)(k) | \leq 4 \varepsilon.
	\end{equation*}
\end{proof}


The requirement on the continuity of the function $\varphi$ appearing in the previous result is necessary. Indeed, 
 observe that if $\varphi \in B(K) \setminus \C(K)$, the extension formula may not work. Let us show an example of such situation.

\begin{example}
	Take $K = [-1,1]$ and $f = 0, g = Id_K : K \to \R$ .
	Consider $S = \{f, g\}$ and $T : S \to \C$ defined as $T(f) = 0$ and $T(g) = Id_K \cdot \chi_{[0,1]}$.
	Then, $T$ is a lattice Lipschitz operator with (optimal) bound function $\varphi = \chi_{[0,1]} \in B(K) \setminus \C(K)$.
	But the McShane extension of $T$ does not map $\C(K)$ into $\C(K)$.
	To see this, let $h$ be the constant function $1$ and note that
	\begin{equation*}
		\hat T(h)(w) = \left\{
		\begin{array}{ll}
			0 & \text{if} \ w < 0 \\
			- 1 + 2w \ & \text{if} \ w \geq 0,
		\end{array} \right.
	\end{equation*}
	a non-continuous function.
	Observe that we get a different result if we take another bound function $\hat\varphi \in \C(K)$ as the one given by  $\hat\varphi(w) = 1 + w,$ if $w < 0,$ and $\hat\varphi(w) = 1,$ if $w \geq 0$. By Theorem \ref{extcont}, the McShane extension will be a (well defined) lattice Lipschitz extension of $T$, in particular,
	\begin{equation*}
		{\hat T}(h)(w) = \left\{
		\begin{array}{ll}
			- 1 - w & \text{if} \ w < 0 \\
			- 1 + 2w \ & \text{if} \ w \geq 0
		\end{array} \right\}
	\end{equation*}
which of course is a continuous function.
\end{example}

\end{document}